  \definecolor{new}{rgb}{0.9,0.1,0.3}%
  \definecolor{felix}{named}{SteelBlue4}%
  \definecolor{new}{named}{black}%
\newtheorem{thm}{Theorem}
\newtheorem{defn}{Definition}
\newtheorem{prop}{Proposition}
\newtheorem{rem}{Remark}
\newtheorem{ex}{Example}
\newtheorem{lem}{Lemma}
\def\proof{\par\noindent{\bf Proof.\ } \ignorespaces}
\newtheorem{alg}{Algorithm}
\newcommand{\MM}{{\mathcal M}}
\newcommand{\NN}{{\mathbb N}}
\newcommand{\RR}{{\mathbb R}}
\newcommand{\ZZ}{{\mathbb Z}}
\newcommand{\diag}[1] {\mathsf{diag}( #1 )}
\newcommand{\Sig}{\mathcal{S}}
\newcommand{\Beh}{\mathcal{B}}
\newcommand{\Op}{\mathcal{O}}
\newcommand{\OpFr}{\overline{\Op}}
\DeclareMathOperator{\im}{im}
\DeclareMathOperator{\id}{id}
\newcommand{\pdeg}{\mathop{\operatorname{deg}_\partial}}
\renewcommand{\(}{(\!(}
\renewcommand{\)}{)\!)}
\newcommand{\ID}[1][\relax]{\mathbf{1}\ifx#1\relax\relax\else_{#1}\fi}
\newcommand{\ZERO}[1][\relax]{%
  \mathbf{0}\ifx#1\relax\relax\else_{\mytimes#1}\fi}
\gdef\mytimes#1,#2{#1\times#2}
\newcommand{\Mat}[3]{{#1}^{#2\times#3}}             
\newcommand{\MatGr}[2]{\operatorname{Gl}_{#2}(#1)}  
\newcommand{\RV}[2]{{#1}^{1\times#2}}               
\newcommand{\CV}[2]{{#1}^{#2\times1}}               
\newcommand{\RS}[3][R]{\RV{#1}{#3}{#2}}             
\newcommand{\Mod}[4][R]{\mathchoice
  {\frac{\RV{#1}{#4}}{\RS[#1]{#2}{#3}}}%
  {{\RV{#1}{#4}}/{\RS[#1]{#2}{#3}}}%
  {{\RV{#1}{#4}}/{\RS[#1]{#2}{#3}}}%
  {{\RV{#1}{#4}}/{\RS[#1]{#2}{#3}}}}
\newcommand{\row}[2]{{#2}_{#1,*}}
\newcommand{\col}[2]{{#2}_{*,#1}}
\newcommand{\eg}{e.\,g.}
\newcommand{\ie}{i.\,e.}
\newcommand{\wrt}{w.\,r.\,t.}
\newcommand{\qtext}[1]{\quad\text{#1}\quad}
\newcommand{\qqtext}[1]{\qquad\text{#1}\qquad}
\newcommand{\WLOG}{w.\,l.\,o.\,g.}
\renewcommand{\th}{-th}
\newcommand{\GB}{Gr{\"o}bner basis}
\newcommand{\Rcal}{\mathcal{R}}
\title{On the computation of $\pi$-flat outputs for linear time-varying differential-delay systems} 
\author{Felix Antritter\thanks{Automatisierungs- und Regelungstechnik, EIT8.1, Universit\"at der Bundeswehr M\"unchen, Germany. E-mail: \texttt{felix.antritter@unibw.de}}
\and
Franck Cazaurang\thanks{Univ. Bordeaux, IMS, UMR 5218, 33405 Talence cedex, France. E-mail: \texttt{franck.cazaurang@u-bordeaux1.fr}}
\and
Jean L{\'e}vine\thanks{CAS-Math{\'e}matiques et Syst{\`e}mes, MINES-ParisTech, 35 rue Saint-Honor{\'e}, 77300 Fontainebleau, France. E-mail: \texttt{jean.levine@mines-paristech.fr}}
\and
Johannes Middeke\thanks{Upper Austrian government. Research Institute for Symbolic Computation (RISC), Johannes Kepler University, Altenbergerstra{\ss}e~52, A-4040~Linz, Austria. E-mail: \texttt{jmiddeke@risc.jku.at}}
~\footnote{J.~Middeke was supported by the Austrian Science Foundation (FWF) under the project DIFFOP (P\,20\,336-N18) and by the strategic programme ``Innovatives O{\"O} 2010plus'' by the Upper Austrian government.}
}
\date{}
\begin{document}
\maketitle

\begin{abstract}
We introduce a new definition of $\pi$-flatness for linear differential delay systems with time-varying coefficients. We characterize $\pi$- and $\pi$-0-flat outputs and provide an algorithm to efficiently compute such outputs. We present an academic example of motion planning to discuss the pertinence of the approach.
\end{abstract}

\paragraph{\textit{Keywords}}
linear time-varying system, differential delay system, differential flatness, $\pi$-flatness, Ore polynomials, motion planning, modules.


\section{Introduction}
\color{new}In this paper, we consider systems of the kind
\begin{equation}
A x=B u,\label{sys_lin_2}
\end{equation}
with the  state, or partial state, $x$ of dimension $n$, input $u$ of dimension $m$ and matrices $A\in\Mat{\Op}nn$ and $B\in\Mat{\Op}nm$, where $\Op$ is the polynomial ring of operators $\delta$ and $\partial$, where:
\begin{itemize} 
\item $\delta$ is the shift, or time-delay, operator, defined by $\delta f(t)= f(t-\tau)$, with $\tau \in \RR_{+}$  given, for all continuous functions $f$ and all $t$, 
\item $\partial \triangleq \frac{d}{dt}$ is the ordinary time derivative, 
\item the coefficients belong to the field $K$ of real meromorphic functions of time. 
\end{itemize}
The ring $\Op$ will be denoted by $K[\delta, \partial]$ in the sequel (see Section~\ref{op-sig:sec} for a detailed presentation).

A typical example of such systems is given by:
\begin{ex}\label{intro:ex}
\begin{equation}\label{ex:eq}
\begin{aligned}
  \dot{x}_1(t) &= a(t)\bigl(x_2(t-1) - x_2(t-2)\bigr) \\
  \dot{x}_2(t) &= u(t-1),
\end{aligned}
\end{equation}
where the state $x\triangleq (x_{1},x_{2})$ is 2 dimensional, the control $u$ is scalar, and the function $a$ is \emph{meromorphic}\footnote{rational fraction of analytic functions} with respect to time. Here, the matrices $A$ and $B$ are given by
$$A \triangleq \left( \begin{array}{cc}\partial &-a(t)(\delta-\delta^{2})\\0&\partial
\end{array}\right), \quad
B \triangleq \left( \begin{array}{c}0\\\delta
\end{array}\right).
$$
\end{ex}
Our aim in this paper is to extend existing results on the notion of flatness, to such systems, namely the class of linear time-varying differential delay systems with meromorphic time dependence, and to use this property for motion planning.

Let us briefly sketch a historical overview of the concepts of differential flatness, freeness, $\pi$-freeness and $\pi$-flatness.

For ordinary differential systems (without delays) \emph{differential flatness}, roughly speaking, means that all the variables of an under-determined set of ordinary differential equations can be expressed as functions of a particular output, called \emph{flat output}, and a finite number of its successive time derivatives (\cite{Martin_92,Fliess_95,Fliess_99}, see also  \cite{Ramirez_04,Levine_09,Levine_11} and the references therein). In the particular case of \emph{linear} differential systems, relations with the 
behavioral approach of \cite{Polderman_98}, or more precisely with \emph{latent variables} of \emph{observable image representations} \cite{Trentelman_04} have been established (see also \cite{Zerz_06}). In this case, differential flatness is equivalent to the \emph{freeness} of the so-called \emph{system module} \cite{Fliess_90,Fliess_92}, which characterizes the system \emph{controllability}.

For \emph{linear time-invariant differential delay systems} and more general classes of infinite-dimensional systems, only the concepts of \emph{freeness}, with several variants such as torsion freeness, projective freeness and $\pi$-freeness, have been proposed  and thoroughly discussed in \cite{Mounier_95,Fliess_96,Rudolph_03,Chyzak_05,Chyzak_07}, without  explicit reference to the notion of flatness. To the authors knowledge, the first explicit appearance, in this context, of the notion of $\pi$-flatness, and in fact, to be more precise, of $\delta$-flatness, is in \cite{Petit_97,Petit_00}. 
Then, circumventing serious algebraic obstructions recalled in Section~\ref{sys:sec}, the notion of freeness has been extended by \cite{Chyzak_05,Chyzak_07, Robertz_14} to the context of linear time-varying differential-delay systems with polynomial time dependence. For linear infinite-dimensional systems, relations with the notion of system parameterization are discussed in \cite{Pommaret_99,Pommaret_01} and, with the behavioral approach, in  \cite{Rocha_97}. Interesting control applications of linear differential-delay systems may be found in \cite{Mounier_95,Petit_00,Rudolph_03}.

Characterizing differential flatness and flat outputs has been an active topic since the beginning of this theory. The interested reader may find a historical perspective of this question in \cite{Levine_09,Levine_11}. Constructive algorithms, relying on standard computer algebra environments, may be found e.g. in \cite{Antritter_08,Antritter_10} for nonlinear finite-dimensional systems, or \cite{Antritter_11,Chyzak_04} for linear systems over Ore algebras.

The results and algorithms presented in this paper which, as announced, concern time-varying differential-delay systems with meromorphic time-de\-pendence, may be seen as an extension of those of \cite{Petit_97,Levine_03,Levine_09} to this context. A thorough discussion about the existing system theoretic algebraic approaches and the difficulties to extend them to the present context is provided at the beginning of section~\ref{sys:sec}. Unfortunately, this discussion cannot take place in this introduction because of the complexity of its technical content.

Our main contributions are (1) a new definition of $\pi$- and $\pi$-$k$-flatness, for every $k\in \NN$, applicable to linear time-invariant as well as time-varying systems and suitable for motion planning, with or without distinction between  state and input variables, (2) the characterization of $\pi$- and $\pi$-0-flatness in terms of the \emph{hyper-regularity} of the system matrices, (3) the connection between \emph{hyper-regular} matrices and \emph{row-/column-reduction}, a computationally most useful property, and (4) yielding an elementary algorithm, of polynomial time complexity, to compute $\pi$- and $\pi$-0-flat outputs, based on the row/column reduction of the former matrices. 

Let us emphasize that, with our definition of $\pi$-flatness, all the system trajectories are obtained as functions of a $\pi$-flat output, a finite number of its derivatives, time delays, and a number of predictions defined by the degree of the polynomial~$\pi$, to be compared with the results of \cite{Chyzak_07, Robertz_14}, relative to the aforementioned more restrictive freeness concept for linear time-varying systems with polynomial dependence, where the trajectory parameterization does not involve predictions.

Note that, to describe our class of systems, the ring, denoted by $\Op$, of polynomials in both the delay and  differential operators, which is non principal, constitutes a natural algebraic framework. However, 
the evaluation of our $\pi$- and $\pi$-0-flatness criteria relies upon computations over a principal ideal ring, denoted by $\OpFr$, which contains $\Op$ and aids in making the computations much simpler. 

In addition, our formalism is applicable to systems where the distinction between the input variables, possibly given by the nature of the actuators, and the other system variables (states or partial states), is required or not, at will.

The paper is organized as follows. A first section briefly presents recalls on operators and signals (Section~\ref{op-sig:sec}), followed by recalls on matrices over the non-commutative differential ring $\Op$ (Section~\ref{mat-hyp-reg:sec}). In the latter section, the notions of hyper-regularity and row and column reduction are introduced and characterized in this non-commutative context. Section~\ref{sys:sec} then deals with differential algebraic notions of systems, $\pi$-flatness and $\pi$-$k$-flatness and contains the main results characterizing $\pi$-flat and $\pi$-0-flat outputs. Algorithms to compute such $\pi$-flat and $\pi$-0-flat outputs are then deduced.
Finally, the proposed methodology is illustrated by an example of motion planning in Section~\ref{ex:sec}.

\normalcolor

\section{Operators}\label{op-sig:sec}

In order to deal with differential delay systems, we need some recalls of polynomial algebra.

\subsection{Recalls on Ore polynomials}\label{sec:Ore}
In order to model linear mixed differential and time-delay
operators, we use the so-called Ore
polynomial approach. Ore polynomials are a class of non-commutative polynomials, named
after {O}ystein Ore who was the first to discuss them
in~\cite{OreEng_33}. 

Let $K$ be a ring and let $\sigma\colon K\to K$ be an automorphism. An
additive map $\vartheta\colon K\to K$ is called a
\emph{$\sigma$-derivation} if for all $a$ and $b \in K$ the
\emph{$\sigma$-Leibniz rule}
\begin{math}
  \vartheta(ab) = \sigma(a)\vartheta(b) + \vartheta(a)b
\end{math}
holds (compare with~\cite[Sect.~7.3]{Cohn_03}). Consider the free $K$-left
module generated by the powers of an indeterminate $x$. We define the
right-multiplication of $x$ by an element of $K$ with the
\emph{commutation rule}
\begin{equation*}
  x a = \sigma(a) x + \vartheta(a) \qquad\text{for all } a \in K.
\end{equation*}
Assuming associativity and distributivity, this rule allows us to
compute arbitrary products. It can be shown (see, \eg,
\cite[Thm.~7.3.1]{Cohn_03}) that this makes the free module into a ring
which we call the ring of \emph{(left) Ore polynomials} in $x$ \wrt\
$\sigma$ and $\vartheta$. In the literature this is usually denoted by
$K[x;\sigma,\vartheta]$.

The \emph{degree} of an Ore polynomial $p$ is defined as the largest
exponent $n$ such that $x^n$ has a non-zero coefficient in $p$. We use
$\deg 0 = -\infty$. If $K$ is a domain, then we have the familiar rule
$\deg (pq) = \deg p + \deg q$ for all $p$ and $q \in
K[x;\sigma,\vartheta]$. If $K$ is a division ring, then it is possible
to divide elements in $K[x;\sigma,\vartheta]$ with remainder in a way
which is very similar to the usual polynomial
division. See~\cite{Bronstein_96} for details. This turns
$K[x;\sigma,\vartheta]$ into a (left and right) principal ideal domain
and thus into a (left or right) Ore ring (see also
\cite[Prop.~5.9]{Cohn_00}). Thus, we can form the field of (left or
right) fractions $K(x;\sigma,\vartheta)$. See for example
\cite{Jezek_96} for an extensive introduction on how to properly define
the various arithmetic operations for such fractions.

\medskip

If $K$ is commutative, $\sigma = \id$ and $\vartheta = 0$ are the
identity and the zero map, respectively, then $K[x;\id,0]$ is just
the usual polynomial ring $K[x]$. Two other important special cases of
Ore polynomials are linear differential and delay operators, which we
discuss in the following.
\begin{ex}
  Let $K$ be the field of meromorphic functions over the real line (see e.g. \cite[p. 42]{Cartan_61})).
  \begin{enumerate}
  \item Assume first that $\vartheta = 0$ and that $\sigma = \delta$ is
    the \emph{time delay operator} which is defined by $\delta f(t) =
    f(t-\tau)$ for all $f \in K$ where $\tau > 0$ is a fixed real
    number. In an abuse of notation, we will denote the ring
    $K[x;\delta,0]$ just by $K[\delta]$, \ie, we identify $\delta$
    with the Ore variable $x$. The commutation rule for $K[\delta]$ is
    \begin{math}
      \delta a(t) = a(t-\tau) \delta
    \end{math}
    for all $a \in K$. We call it the ring of \emph{time delay
      operators}.              
  \item Assume now, that $\sigma = \id$. Let $\vartheta = \partial$ be
    the usual derivation in the sense of calculus. Then, the ring
    $K[x;\id,\partial]$---which we will just write as
    $K[\partial]$---has the commutation rule
    \begin{math}
      \partial a(t) = a(t) \partial + \dot{a}(t)
    \end{math}
    for all $a \in K$. This is the ring of \emph{differential
      operators}.
  \end{enumerate}
\end{ex}

Since the maps $\partial$ and $\delta$ commute, we may extend
$\partial$ to $K[\delta]$ by setting $\partial(\delta) = 0$. Thus, the
ring $\Op = K[\delta,\partial]$---or in more complete notation
$K[x;\delta,0][y;\id,\partial]$---is well-defined and has the
commutation rules
\begin{equation*}
  \delta a(t) = a(t-\tau)\delta,
  \qquad
  \partial a(t) = a(t) \partial + \dot{a}(t)
  \qqtext{and}
  \partial \delta = \delta \partial
\end{equation*}
where $a \in K$. We call $\Op$ the ring of time-delay differential
operators. The commutation of $\partial$ and $\delta$ means that this
ring is an Ore algebra in the sense
of~\cite[Def.~1.2]{ChyzakSalvy_98}. \color{new} Using the formul\ae\ in
\cite[Thm.~13]{Jezek_96}, we may extend the action of $\partial$ to
the fractions of $K[\delta]$, denoted by $K(\delta)$. Thus, also the ring $\OpFr =
K(\delta)[\partial]$ is a well-defined Ore polynomial
ring. Since $K(\delta)$ is a division ring, the ring
$\OpFr$ will be Euclidean---just as explained above. This means that
in $\OpFr$ computational tools such as greatest common divisors and
certain matrix normal forms become available---see
Section~\ref{mat-hyp-reg:sec}---which cannot be calculated within
$\Op$. This fact will become very important for our algorithms in
Subsection~\ref{subsec:pi-flat}

\begin{ex}\label{ex:explicit-op}
  An example of a typical element of $\OpFr = K(\delta)[\partial]$
  would be
  \begin{equation*}
    p = (\delta^2 - \delta)^{-1} \partial^2 
    + a (\delta+b)^{-1}(\delta - c)\partial 
    + \delta^2
  \end{equation*}
  where $a,b,c \in K$ are meromorphic functions. Note, that division
  is only allowed by (sub-) expressions containing only $\delta$ but
  no $\partial$.
\end{ex}
\normalcolor

\section{Matrices \&\ Hyper-Regularity}\label{mat-hyp-reg:sec}

We model systems of linear time-delay differential equations using
matrices of operators. The set of all $n\times m$ matrices with
entries in $\Op$ is denoted by $\Mat{\Op}nm$. Square matrices in
$\Mat{\Op}nn$ which possess a two-sided inverse that is also in
$\Mat{\Op}nn$ are called \emph{unimodular}. The set of all unimodular matrices of $\Mat{\Op}nn$ is denoted by
$\MatGr{\Op}n$. We write $\ID[n]$ for the
$n\times n$ identity matrix and $\ZERO[n,m]$ for the $n\times m$ zero
matrix. In both cases we will omit the indices when they are obvious
from the context.  We use $\RV{\Op}m$ for the set of row vectors of
length $m$ and $\CV{\Op}n$ for the set of column vectors of length
$n$. Given a matrix $M \in \Mat{\Op}nm$ we denote its $\Op$-row space
by $\RS[\Op]{M}n$. For a matrix $M = (M_{i,j}) \in \Mat{\Op}nm$ we
define
\begin{math}
  \pdeg M = \max \{ \pdeg M_{i,j} \mid i=1,\ldots,n, j=1,\ldots,m \}.
\end{math}
This will also be applied to row or column vectors regarding them as
$1\times m$- or $n\times 1$-matrices, respectively. We write
$\row{i}M$ for the $i$\th\ row of $M$ and $\col{j}M$ for the $j$\th\
column of $M$ where $i=1,\ldots,n$ and $j=1,\ldots,m$.

\bigskip

Let $M \in \Mat{\Op}nm$. Since $K(\delta)$ is a division ring, $\OpFr$
is a principal ideal domain. This means we can apply
\cite[Thm.~8.1.1]{Cohn_85} in order to find unimodular matrices $S \in
\MatGr{\OpFr}n$ and $T \in \MatGr{\OpFr}m$ such that $SMT$ is in
\emph{Smith-Jacobson form}. This is a diagonal form with the
additional property that each diagonal element is a total divisor of
the next one.

\begin{defn}[Hyper-regularity \cite{Levine_09}]\label{def:hyperr}
  A matrix $M \in \Mat{\Op}nm$ is called \emph{hyper-regular} if the
  diagonal elements of its Smith-Jacobson form are all $1$, \ie, if
  there are unimodular matrices $S \in \MatGr{\OpFr}n$ and $T \in
  \MatGr{\OpFr}m$ such that
  \begin{equation*}
  \begin{aligned}
    n \geq m &
    \qtext{and}
    SMT = \begin{pmatrix} \ID[m] \\ \ZERO[(n-m),m] \end{pmatrix}\\
     \qtext{or} n < m & \qtext{and} SMT = (\ID[n], \ZERO[n,(m-n)]).
     \end{aligned}
  \end{equation*}
\end{defn}

Usually, the given method for obtaining a Smith-Jacobson form involves
computing a diagonal form by repeatedly taking greatest common (left
or right) divisors as a first step. Since the performance of this
approach is difficult to predict and most likely exponential due to
degree growth, we will give an alternate, and computationally more efficient, characterization of
hyper-regularity below.

\begin{prop}[\cite{Antritter_11}]
\begin{itemize}
\item[(i)] A matrix $M \in \Mat{\Op}nm$, with $n < m$, is  \emph{hyper-regular} if, and only if, it possesses a right-inverse, i.e. $\exists$ $\bar{T}\in \MatGr{\OpFr}m$  such that $M\bar{T}= (\ID[n], \ZERO[n,(m-n)])$.
\item[(ii)] A matrix $M \in \Mat{\Op}nm$, with $n \geq  m$, is  \emph{hyper-regular} if, and only if, it possesses a left-inverse, i.e. $\exists$ $\bar{S}\in \MatGr{\OpFr}n$  such that $\bar{S}M= \begin{pmatrix} \ID[m] \\ \ZERO[(n-m),m] \end{pmatrix}$.
\end{itemize}
\end{prop}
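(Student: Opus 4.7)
The plan is to prove both directions of (i) and (ii) by leveraging what the Smith--Jacobson form already gives us. The reverse directions are immediate: if $MT = (\ID[n], \ZERO[n,m-n])$ with $T \in \MatGr{\OpFr}m$, then taking $S = \ID[n] \in \MatGr{\OpFr}n$ yields $SMT$ in exactly the shape required by Definition~\ref{def:hyperr}, so $M$ is hyper-regular; symmetrically for (ii) with a left-inverse.

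The forward direction is where the actual work is. Assume $n < m$ and that $M$ is hyper-regular, so there exist unimodular $S \in \MatGr{\OpFr}n$ and $T \in \MatGr{\OpFr}m$ with $SMT = (\ID[n], \ZERO[n,m-n])$. The statement of the proposition requires a \emph{unimodular} $T'$ such that $MT' = (\ID[n], \ZERO[n,m-n])$, i.e., we must eliminate the factor $S$ on the left while staying inside $\MatGr{\OpFr}m$. First I would rewrite the Smith--Jacobson identity as $MT = S^{-1}(\ID[n], \ZERO[n,m-n]) = (S^{-1}, \ZERO[n,m-n])$, using that right-multiplication by $(\ID[n], \ZERO[n,m-n])$ merely appends zero columns. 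Then I would absorb the factor $S^{-1}$ into the first $n$ columns of $T$ by defining
\begin{equation*}
  T' \;=\; T \begin{pmatrix} S & \ZERO[n,m-n] \\ \ZERO[m-n,n] & \ID[m-n] \end{pmatrix}.
\end{equation*}
The block-diagonal factor is unimodular over $\OpFr$ (its inverse being the obvious block-diagonal matrix with $S^{-1}$ in the upper-left block), so $T'$ is a product of two unimodular matrices and hence lies in $\MatGr{\OpFr}m$. A direct computation gives $MT' = (S^{-1}, \ZERO)\,\mathrm{diag}(S, \ID[m-n]) = (\ID[n], \ZERO[n,m-n])$, as desired.

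For (ii), the argument is entirely symmetric: one starts from $SMT = \left(\begin{smallmatrix} \ID[m] \\ \ZERO[n-m,m] \end{smallmatrix}\right)$, observes that left-multiplication by this column-stacked block appends zero rows to $T^{-1}$, and then absorbs $T^{-1}$ into the first $m$ rows of $S$ via a block-diagonal unimodular factor to produce the required left-inverse.

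I expect no real obstacle beyond bookkeeping; the only subtle point is to verify that the absorbing block-diagonal factor is unimodular \emph{over} $\OpFr$ (which holds because $S$, respectively $T$, already is), and to remember that the ``inverse'' in the statement is not a two-sided inverse but the specific product $MT = (\ID, \ZERO)$ (resp.\ $SM = \left(\begin{smallmatrix} \ID \\ \ZERO \end{smallmatrix}\right)$) obtained from the Smith--Jacobson form. This also makes clear that the characterisation is nothing more than a convenient repackaging of the definition, which is why it opens the door to the efficient row/column reduction algorithm announced in the introduction.
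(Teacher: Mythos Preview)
Your proposal is correct and takes essentially the same approach as the paper: both absorb the left factor $S$ into the right transformation by setting $T' = T\,\mathrm{diag}(S,\ID[m-n])$, yielding $MT' = (\ID[n],\ZERO)$, and the converse is handled by taking $S = \ID[n]$. The only difference is presentational---the paper writes the computation as a single chain starting from the identity $(\ID,\ZERO) = S^{-1}(\ID,\ZERO)\,\mathrm{diag}(S,\ID)$, whereas you first compute $MT = (S^{-1},\ZERO)$ and then right-multiply.
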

\proof We only prove (i). The proof of (ii) follows the same lines and is left to the reader.
Let $M \in \Mat{\Op}nm$ be given with $n < m$. $M$ is hyper-regular if, and only if, there are matrices $S \in \MatGr{\OpFr}n$ and $T \in
\MatGr{\OpFr}m$ such that $SMT = (\ID[n], \ZERO[n,(m-n)])$. Thus, using the identity
\begin{equation*}
 \begin{pmatrix} \ID[n],& \ZERO \end{pmatrix}
  = S^{-1} \begin{pmatrix} \ID[n],& \ZERO \end{pmatrix}
  \begin{pmatrix} S & \ZERO \\ \ZERO & \ID[m-n] \end{pmatrix}
\end{equation*}
we get
\begin{equation*}
 \begin{pmatrix} \ID[n],& \ZERO \end{pmatrix} = S^{-1} (SMT) 
  \begin{pmatrix} S & \ZERO \\ \ZERO & \ID[m-n] \end{pmatrix}
  = M 
  \Bigl( T \begin{pmatrix} S & \ZERO \\ \ZERO & \ID[m-n] \end{pmatrix}\Bigr).
\end{equation*}
which proves that $M$ is hyper-regular if, and only if, it
has $\bar{T} = \Bigl( T \begin{pmatrix} S & \ZERO \\ \ZERO & \ID[m-n] \end{pmatrix}\Bigr)$ as right-inverse. \qed

We will apply the technique of row-reduction for the computation of
left- or right-inverses. A matrix $M \in \Mat{\OpFr}nm$ without zero
rows is called \emph{row-reduced} (or row-proper) if for all row
vectors $v \in \RV{\OpFr}m$ the so-called \emph{predictable degree
  property}
\begin{equation*}
  \label{eq:PDP}
  \pdeg vM = \max \{ 
  \pdeg v_j + \pdeg \row{j}M
  \mid
  j=1,\ldots,n \}
\end{equation*}
holds.  Note, that this definition differs from the usual one as given
in~\cite[Sect.~2]{Labahn_06} or~\cite[Sect.~2.2]{Zerz_07}, but is shown to
be equivalent in \cite[Lem.~A.1\,(a)]{Labahn_06} where one can easily
check that the proof also works for division rings instead of
fields. Similarly, the algorithm outlined in the proof
of~\cite[Thm.~2.2]{Labahn_06} can be easily transferred to division rings,
and we obtain that for every matrix $M \in \Mat{\Op}nm$ there exists a
matrix $S \in \MatGr{\OpFr}n$ such that the non-zero rows of $SM$ form
a row-reduced submatrix. Using the results in \cite{Middeke_11} row-reduction is of low polynomial complexity in the size of $M$ and its degree.
Directly from
the definition we obtain that the rows of a row-reduced matrix must be
linearly independent.

Row-reducedness is connected to the Popov normal form (see, \eg,
\cite[Def.~2.2]{Cheng_08}) which is essentially a row-reduced matrix
with additional properties to make it unique. One may prove that for
each matrix in $\Mat{\Op}nm$ there exists exactly one matrix Popov
form having the same row space. Also, row-reduction may be regarded as
a special case of \GB\ computation---see~\cite{Middeke_11}.

\smallskip

\color{new} 
We would like to derive a characterization of hyper-regular matrices
using row- and column-reduction.
\normalcolor
We consider now the case $n \geq m$. Let $M \in \Mat{\Op}nm$, and let
$\tilde S \in \MatGr{\Op}n$ be such that
\begin{equation*}
  \tilde SM = \begin{pmatrix} \tilde M \\ \ZERO \end{pmatrix}
\end{equation*}
where $\tilde M \in \Mat{\OpFr}km$ is row-reduced and $k$ is the
(left) row-rank of $M$ by~\cite[Thm.~A.2]{Labahn_06}. Assume first that
$M$ is hyper-regular. As discussed above, this means that all unit
vectors of $\RV{\Op}m$ are in the row-space of $\tilde{M}$. Thus, we
must have $k = m$ and by \cite[Lem.~A.1\,(c)]{Labahn_06} all rows of
$\tilde{M}$ have $\partial$-degree $0$ or below. Thus, $\tilde{M} \in
\Mat{K(\delta)}mm$. Since all rows of $\tilde{M}$ are linearly
independent, we conclude that $\tilde{M}$ has maximal (left) row rank.

Conversely, if row-reduction of $M$ yields a matrix of degree $0$ and
(left) row-rank $m$, then clearly $M$ is hyper-regular.

\medskip

There is also the analogue concept of \emph{column-reduction}. Each
matrix may be brought into column-reduced form (up to zero rows) by
right multiplication with a unimodular matrix. All the results cited
above hold with the appropriate changes. In total, we have proved the
following lemma.

\begin{lem}[\cite{Antritter_11}]\label{lem:hyperr}
  A matrix $M \in \Mat{\Op}nm$ is hyper-regular if and only if
  \begin{enumerate}
  \item $n \geq m$ and row-reduction yields a matrix of
    $\partial$-degree $0$ and left row-rank~$m$
  \item or, $n \leq m$ and column-reduction yields a matrix of
    $\partial$-degree $0$ and right column-rank~$n$.
  \end{enumerate}
\end{lem}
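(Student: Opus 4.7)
The plan is to verify the two cases of the lemma; since column-reduction plays exactly the role for the case $n \leq m$ that row-reduction plays for $n \geq m$, it suffices to carry out case~(1) in detail and invoke the analogous dual argument for case~(2). Much of the forward implication for case~(1) has already been sketched in the paragraph immediately preceding the lemma, so the main work is to tidy those details and establish the converse.

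For case~(1) with $n \geq m$, I would start from the row-reduction $\tilde S M = \begin{pmatrix}\tilde M\\ \ZERO\end{pmatrix}$ with $\tilde S \in \MatGr{\OpFr}n$ and $\tilde M$ row-reduced whose $k$ rows equal the left row-rank of $M$. For the forward direction, any hyper-regular factorisation $SMT = \begin{pmatrix}\ID[m]\\ \ZERO\end{pmatrix}$ yields, upon right-multiplying by $T^{-1}$, $SM = \begin{pmatrix}T^{-1}\\ \ZERO\end{pmatrix}$, so the $\OpFr$-row space of $M$ is all of $\RV{\OpFr}m$. Hence $k = m$ and $\tilde M$ is a square $m \times m$ matrix over $\OpFr$ of full row rank, in particular invertible. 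Setting $V = \tilde M^{-1}$ and reading the identity $V \tilde M = \ID[m]$ row by row through the Predictable Degree Property gives, for every $i$, $0 = \pdeg e_i = \max_j(\pdeg V_{ij} + \pdeg \row{j}{\tilde M})$, which forces $\pdeg \row{j}{\tilde M} = 0$ for every index $j$ with $V_{ij} \neq 0$. Since the invertible matrix $V$ has at least one nonzero entry in every column, every row of $\tilde M$ has $\partial$-degree $0$, so $\tilde M \in \Mat{K(\delta)}mm$ is of rank $m$.

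For the converse direction of case~(1), suppose the row-reduction yields $\tilde M \in \Mat{K(\delta)}mm$ of rank $m$. Then $\tilde M$ is invertible over the division ring $K(\delta) \subset \OpFr$; setting $V = \tilde M^{-1} \in \MatGr{K(\delta)}m$ and $S = \begin{pmatrix} V & \ZERO \\ \ZERO & \ID[n-m] \end{pmatrix}\tilde S \in \MatGr{\OpFr}n$ one obtains $SM = \begin{pmatrix}\ID[m]\\ \ZERO\end{pmatrix}$, i.e.\ exactly the Smith--Jacobson form required by Definition~1 with $T = \ID[m]$, so $M$ is hyper-regular. Case~(2) with $n \leq m$ is entirely symmetric: apply column-reduction on the right to get $M \tilde T = (\tilde M', \ZERO)$ with $\tilde M'$ column-reduced and $k'$ columns equal to the right column-rank, and run the argument above with rows replaced by columns and $V$ acting from the right. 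The only subtlety worth flagging is the use of the Predictable Degree Property over the non-commutative $\OpFr$: without the invertibility of $V$ one could only deduce degree $0$ for the rows actually occurring in the linear combinations expressing each $e_i$, so the crucial point is that $V$, being invertible, has full column support, which then forces \emph{every} row of $\tilde M$ to be of degree zero.
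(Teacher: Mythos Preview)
Your proof is correct and follows essentially the same route as the paper's argument (which is given in the text preceding the lemma): row-reduce, use that hyper-regularity forces the row space of $M$ over $\OpFr$ to be all of $\RV{\OpFr}m$ so that $k=m$, then deduce $\partial$-degree~$0$ and full rank; the converse and case~(2) are handled identically by both. The only cosmetic differences are that the paper outsources the degree-$0$ step to \cite[Lem.~A.1(c)]{Labahn_06} whereas you unpack it directly via the predictable degree property, and you spell out the converse construction of $S$ explicitly where the paper writes ``clearly''; one small phrasing to tighten is that ``full row rank'' over the PID $\OpFr$ does not by itself give invertibility---you are really using that the row space equals $\RV{\OpFr}m$, which you had already established.
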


\section{Systems}\label{sys:sec}

Consider system (\ref{sys_lin_2}) of the introduction
with state, or partial state, $x$ of dimension $n$, input $u$ of dimension $m$ and the matrices $A\in\Mat{\Rcal}nn$ and $B\in\Mat{\Rcal}nm$, where $\Rcal$ is a ring.

For the analysis of such systems, two important objects are generally considered (see e.g. \cite{Fliess_90,Oberst_90,Willems_91,Fliess_92,Mounier_95,Polderman_98,Pommaret_99,Chyzak_05,Zerz_07}):

\begin{itemize}
\item its \emph{behavior} $\Beh \triangleq \ker (A,-B)$,
where the kernel is taken with respect to the chosen signal space\footnote{this space is not uniquely defined and may be chosen according to the specific application one is interested in, such as motion planning, tracking, etc. See section~\ref{op-sig:sec}}, where the components of the variables $x$ and $u$ are supposed to live. Here, we choose:
\begin{multline}\label{eq:sigsp}
\mathcal{S}=  \{ f \colon \mathbb{R} \to \mathbb{R}
  \mid
  \exists E_f \subseteq \mathbb{R} \text{ discrete } \colon
  f \in \mathcal{C}^\infty(\mathbb{R} \setminus E_f, \mathbb{R})
  \\
  \text{ and }
  \exists t_0 \in \mathbb{R}\;
  \forall t \in \mathbb{R}\colon t < t_0 \implies  f(t) = 0 
  \}
\end{multline}
which is an adaptation of a space introduced in \cite[Sec. 3]{Zerz_06}. Indeed, $\Sig$ is an $\Op$-module.
This fact is needed to justify that all computations done with matrices over $\OpFr$ in Section~\ref{sys:sec} actually transfer to the actions of the operators on the signals.
Note that $\Sig$ contains splines and step functions. For a discussion of this choice in regard to the motion planning problem, the reader is invited to refer to Section~\ref{ex:sec};

\item and its \emph{system module} $\MM \triangleq \Mod[\Rcal]{(A,-B)}n{(n+m)}$,
where $\RV{\Rcal}p$ is the set of row vectors of length $p$, for every $p\in \NN$, with components in $\Rcal$ and where $\RS[\Rcal]{(A,-B)}n$ is the module generated by the rows of the matrix $(A,-B)$.
\end{itemize}

Linear time-invariant differential systems, \ie \ without delay, the ring $\Rcal$ being chosen as $\RR[\partial]$ and being commutative,  are shown to be \emph{differentially flat} if, and only if, their system module $\MM$ over $\Rcal$ is free (which is equivalent to controllability \cite{Fliess_90,Fliess_92}), and a flat output is, by definition, a basis of the free module $\MM$ (see \eg\ \cite{Fliess_95,Levine_09}). Extending this approach to the context of linear differential-delay systems, Mounier, Fliess, Rudolph and others have thus proposed to replace the notion of flatness by this freeness property of the associated system module (\cite{Mounier_95,Petit_97,Rudolph_03}). Nevertheless, only few systems have a free system module.
Moreover, since the considered base ring $\Rcal=\Op = K[\delta,\partial]$ is not a principal ideal domain, freeness is in general different \eg\ from torsion freeness or projective freeness (see \eg\ \cite{Mounier_95}), with important, but otherwise unclear, practical consequences on the control possibilities of the system. However, for linear \emph{time-invariant} differential-delay systems, namely with $\Rcal= \RR [\delta,\partial]$, in virtue of the \emph{localization} property of a commutative Ore algebra, torsion freeness of $\MM$ is equivalent to the existence of a so-called \emph{liberation polynomial} $\pi \neq 0$, $\pi \in \RR [\delta]$, such that $\pi^{-1}\MM$ is free. This property is called $\pi$-freeness (see again \cite{Mounier_95}) and a basis of the free module $\pi^{-1}\MM$ is called a $\pi$-flat output. It can be interpreted as follows: if the system module, finitely generated by $m$ input variables, is torsion free, it admits a basis of the form $\pi^{-1} \{ y_{1}, \ldots, y_{m}\}$ where $y_{i}\in \Rcal$, $i=1,\ldots, m$. Consequently, every system variable can be expressed as a combination of derivatives, delays and advances of $(y_{1}, \ldots, y_{m})$. Note that the action of $\pi^{-1}$ on a variable $z$ may be interpreted as a formal power series in $\delta^{-1}$ where $\delta^{-1}z(t) = z(t+\tau)$ whose degree is bounded from above by some $N\in \NN$, and thus a finite combination of advances, or predictions, of $z$ (see Section~\ref{sec:motplan}).

\color{new}
To extend this approach to time-varying differential-delay systems without restricting the time dependence of the system coefficients to be polynomial, as in \cite{Chyzak_05,Chyzak_07} where effective Gr\"{o}bner bases techniques are used, many new difficulties appear. The main one is that localization by a single polynomial, say $p\in \Rcal$, is usually not possible  since the set made of the powers of this polynomial, namely $\{ p^{k} : k\in \NN \}$, is not in general an Ore set\footnote{We say that the set $\{ p^{k} : k\in \NN \}$ is an Ore set if, and only if, for all $k\in \NN$ and $a\in\Rcal$, there exist $n\in \NN$ and $b \in \Rcal$ such that $ap^{n}=p^{k}b$ (see, \eg,
  \cite[Sec.~5]{Cohn_00}, \cite[Sec.~2.1.6]{Mcconnell_01}).}, as shown in the following example:

\begin{ex}  
  Consider for simplicity the case $\tau=1$ and $\Rcal= K[\delta]$, \ie with $\delta a(t)\triangleq a(t-1)\delta$ for all $a\in K$. Then for $a(t) \equiv t$ and $p =
  \delta-t$ there are no exponent $n \geq 0$ and no element $b \in K$
  such that $(\delta-t)^1 b = t (\delta-t)^n$ (which is violating the
  right Ore condition for the set $\{ (\delta-t)^k \mid k \geq 0
  \}$). It is easy to prove by induction on $n$ that $\delta-t$ is not
  a left divisor of $t (\delta-t)^n$: It's obvious for
  $n=0$. Moreover, since 
  \begin{equation*}
    t (\delta - t)
    = \delta (t+1) - t^2
    = \delta (t+1) - t^2 - t + t
    = \delta (t+1) - t(t+1) + t    
    = (\delta-t) (t+1) + t,
  \end{equation*}
  we obtain for $n \geq 0$ that
  \begin{math}
    t (\delta-t)^{n+1} =  (\delta-t) (t+1) (\delta-t)^n + t (\delta-t)^n.
  \end{math}
  Thus, if $\delta-t$ was a left divisor of $t (\delta-t)^{n+1}$ it
  would also be a left divisor of $t (\delta-t)^n$. This contradicts
  the induction hypothesis.
\end{ex}

Therefore, we are lead to propose a different orientation to define $\pi$-flatness, with the two following requirements: (1) trajectory planning may be achieved in an elementary way and (2) sufficiently many linear time-varying systems satisfy this property, indeed including the class of $\pi$-free linear time-invariant systems. We thus propose a new definition based on an extension of the differential flatness one, in the spirit of \cite{Petit_97,Petit_00}, saying that all the system variables are expressible in terms of a flat output, its derivatives, delays and advances in finite number, a definition that we should call ``practical''\footnote{The term \emph{practical} does not mean at all that the definition of \emph{practical flatness} is not algebraically precise!} with regard to the motion planning application.
Moreover we introduce the definition of $\pi$-$k$-flatness, which is an analogue, in our differential-delay context, to the notion of $k$-flatness (see \cite{Martin_97,Pomet_97,Rathinam_98,Pereira_2000}).
This viewpoint is developed in the next two subsections.
\normalcolor

\subsection{Recalls on the framework}

From now on, we take System \eqref{sys_lin_2} with $\Rcal=\Op=K[\partial,\delta]$ as in Section~\ref{sec:Ore}.
Recall that $\delta$, the delay operator, is defined by $\delta f(t) = f(t-\tau)$ for all $t\in \RR$, where $\tau$ is a given positive real number and $f\in K$, and that $\partial \triangleq \frac{d}{dt}$ is the ordinary time derivative operator. The ground field  $K$ is the field of meromorphic functions over the real line and the notation $\OpFr = K(\delta)[\partial]$ is defined in Section~\ref{sec:Ore}. Furthermore, we will consider only the signal space $\Sig$ defined by (\ref{eq:sigsp}).

 We assume
that the matrix $(A,-B)$ has full (left) row rank.

In this case, System \eqref{sys_lin_2} is a differential-delay system with time-varying coefficients, the coefficients being meromorphic functions of time.

Example \ref{intro:ex} from the introduction illustrates this system class. As announced, in this example, the matrices $A,B$ are matrices over the ring $\Op=K[\delta,\partial]$, $x\in \Sig^2$ and $u\in \Sig$.

\subsection{$\pi$-flatness}\label{subsec:pi-flat}

One usual way to consider  System (\ref{sys_lin_2}) is to bring all the variables in one side, i.e. to consider the system 
$$(A,-B) \begin{pmatrix}x\\u\end{pmatrix}=\ZERO$$ 
or, in other words, 
\begin{equation}\label{sysF}
F\xi =0
\end{equation} 
with $F\triangleq (A,-B)$ and
$\xi\triangleq  \begin{pmatrix}x\\u\end{pmatrix}$.

\begin{defn}[$\pi$-flatness]\label{def:piflatness}
  The system \eqref{sysF} is called \emph{$\pi$-flat} if there
  exist $\pi \in K[\delta]$ and matrices $P \in \Op^{m\times (n+m)}$ and $Q
  \in \Op^{(n+m)\times m}$ such that
  \begin{equation}\label{defmat:eq}
    \pi^{-1} Q \Sig^{m} = \Beh = \ker F
    \qquad\text{and}\qquad
    \pi^{-1} P \; \pi^{-1} Q = \ID[m].
  \end{equation}
\end{defn}

Equivalently, there exist a polynomial $\pi \in K[\delta]$ and matrices $P$ and $Q$ of suitable dimensions over the ring $\Op$, such that $\xi= \pi^{-1}Qy$, with $y= \pi^{-1}P\xi$ for all $\xi \in \Beh$, where $y\in \Sig^{m}$ is a $\pi$-flat output. In other words, $\pi$-flatness means that all the system variables can be expressed as linear combinations of $y$ and a finite number of its delays, advances and derivatives (see Section~\ref{sec:motplan}), in an invertible way, namely the matrix $\pi^{-1}P$ admits the matrix $\pi^{-1}Q$ as right-inverse.

If $\pi \in K$, then the system is simply called \emph{flat} and $y$ a flat output.

This definition is thus an extension of the one proposed by \cite{Mounier_95,Fliess_95,Fliess_99} for finite-dimensional nonlinear ordinary differential systems or by \cite{Petit_97}. Other definitions, directly stated in terms of freeness, for time-varying linear differential-delay systems with polynomial time dependence are proposed by Chyzak, Quadrat and Robertz \cite{Chyzak_05}. It can be easily seen that freeness in the sense of Chyzak, Quadrat and Robertz implies $\pi$-flatness with $\pi\equiv 1$ and that, in the context of linear time-invariant systems, we recover the definition of $\pi$-freeness introduced by \cite{Mounier_95}.

\begin{rem}\label{rem:piobar}
Note that Definition \ref{def:piflatness} is equivalent to the existence of matrices $\bar{P}\in\Mat{\OpFr}m{(n+m)}$ and $\bar{Q}\in\Mat{\OpFr}{(n+m)}m$ such that $\bar{Q}\Sig^m=\mathcal{B}$ and $\bar{P}\bar{Q}=\ID[m]$. We recover $\pi$ by computation of the left common denominator of $\bar{P}$ and $\bar{Q}$ \cite[Prop. 5.3]{Cohn_00}. Therefore all computations can be done over $\OpFr$.
\end{rem}

We have the following proposition:

\begin{prop}\label{outputtrans:prop}
Assume that $y$ is a $\pi$-flat output of  system (\ref{sysF}). Let us set $y=\bar{T}z$ with $\bar{T} \in \MatGr{\OpFr}m$. There exists a polynomial $\kappa \in K[\delta]$ such that $z$ is a $\kappa$-flat output of  system (\ref{sysF}).
\end{prop}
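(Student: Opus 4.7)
The plan is to leverage Remark \ref{rem:piobar}, which reformulates $\pi$-flatness over the principal ideal ring $\OpFr$; once $\pi^{-1}$ is absorbed into the matrices, the change of flat output by a unimodular $\bar{T}$ becomes a routine conjugation.

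First, I would invoke Remark \ref{rem:piobar} to produce matrices $\bar{P} \in \Mat{\OpFr}m{(n+m)}$ and $\bar{Q} \in \Mat{\OpFr}{(n+m)}m$ satisfying $\bar{Q}\Sig^m = \mathcal{B}$, $\bar{P}\bar{Q} = \ID[m]$, together with $y = \bar{P}\xi$ and $\xi = \bar{Q}y$ for every $\xi \in \mathcal{B}$. I then define $\bar{P}' := \bar{T}^{-1}\bar{P}$ and $\bar{Q}' := \bar{Q}\bar{T}$; both still have entries in $\OpFr$ since $\bar{T}, \bar{T}^{-1} \in \MatGr{\OpFr}m$. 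A direct computation gives $\bar{P}'\bar{Q}' = \bar{T}^{-1}\bar{P}\bar{Q}\bar{T} = \ID[m]$, and since $\Sig$ is an $\OpFr$-module and $\bar{T}$ is invertible over $\OpFr$, the map $\bar{T}\colon \Sig^m \to \Sig^m$ is a bijection, so $\bar{Q}'\Sig^m = \bar{Q}(\bar{T}\Sig^m) = \bar{Q}\Sig^m = \mathcal{B}$. Moreover, substituting $y = \bar{T}z$ yields $z = \bar{P}'\xi$ and $\xi = \bar{Q}'z$, the expected relations for $z$.

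Finally, I would apply Remark \ref{rem:piobar} in the reverse direction to $(\bar{P}', \bar{Q}')$: the finitely many entries of these matrices are Ore polynomials in $\partial$ with coefficients in the skew field $K(\delta)$, and using that $K[\delta]$ is a left Ore ring I take a common left multiple $\kappa \in K[\delta]$ of all the denominators appearing in those coefficients. Then $P' := \kappa\bar{P}'$ and $Q' := \kappa\bar{Q}'$ lie in $\Op$, giving $\bar{P}' = \kappa^{-1}P'$ and $\bar{Q}' = \kappa^{-1}Q'$, which fulfils Definition \ref{def:piflatness} and exhibits $z$ as a $\kappa$-flat output. The only subtle point worth checking is that clearing the denominator on the left truly lands in $\Op$; it does, because left multiplication by $\kappa \in K(\delta)$ acts on $\sum_i a_i\partial^i$ entrywise on the $K(\delta)$-coefficients without interacting with the $\partial^i$ factors, so no rewriting via the commutation rule $\partial f = f\partial + \dot f$ is triggered.
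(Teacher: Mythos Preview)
Your proof is correct and follows essentially the same approach as the paper: invoke Remark~\ref{rem:piobar}, set $\bar{P}' = \bar{T}^{-1}\bar{P}$ and $\bar{Q}' = \bar{Q}\bar{T}$, verify $\bar{P}'\bar{Q}' = \ID[m]$ and $\bar{Q}'\Sig^m = \bar{Q}\Sig^m = \Beh$, and then extract a common left denominator $\kappa \in K[\delta]$. The paper's argument is terser, but the structure and the key steps are identical; your additional remarks on why $\bar{T}\Sig^m = \Sig^m$ and why left-clearing denominators lands in $\Op$ are sound elaborations of points the paper leaves implicit.
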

\proof
By Remark \ref{rem:piobar}, it is sufficient to prove that $\bar{T}^{-1}\bar{P}$ and $\bar{Q}\bar{T}$ are related to the transformed flat output $z$. Obviously, we have $\bar{T}^{-1}\bar{P}\,\bar{Q}\bar{T} = \ID[m]$ and moreover
$
 \bar{Q}\bar{T} \CV{\Sig}m = \bar{Q} \CV{\Sig}m = \Beh.
$
With $\kappa$ being a common denominator of $\bar{T}^{-1}\bar{P}$ and $\bar{Q}\bar{T}$, the claim follows.\qed

\begin{rem}
To every $\pi$-flat system there obviously corresponds a polynomial $\pi_{0}\in K[\delta]$ of minimal degree such that the system is $\pi_{0}$-flat. 
\end{rem}

To characterize $\pi$-flat systems, we introduce the following definition:
\begin{defn}
We call $\overline{\MM}\triangleq \Mod[\OpFr]Fn{(n+m)} $ the extended system module.
\end{defn}

\begin{thm}\label{thm:piflat_smod}
We have the following equivalences:
\begin{enumerate}[(i)]
\item\label{thm:piflat_smod.i} System \eqref{sysF} is $\pi$-flat;
\item\label{thm:piflat_smod.ii} The extended system module
  $\overline{\MM}$ is free;
\item\label{thm:piflat_smod.iii} The matrix $F$ is
  hyper-regular over~$\OpFr$.
\end{enumerate}

\end{thm}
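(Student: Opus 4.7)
The plan is to prove the cycle (\ref{thm:piflat_smod.iii})$\Rightarrow$(\ref{thm:piflat_smod.ii})$\Rightarrow$(\ref{thm:piflat_smod.i})$\Rightarrow$(\ref{thm:piflat_smod.iii}). The algebraic backbone is the Smith--Jacobson normal form of $F$ over the principal ideal domain $\OpFr$, combined with the characterization of hyper-regularity by one-sided invertibility (Proposition~1). The delicate part is the final implication, which exploits the richness of the signal space $\Sig$.

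For (\ref{thm:piflat_smod.iii})$\Leftrightarrow$(\ref{thm:piflat_smod.ii}), I would write $SFT = (D, \ZERO)$ with $D = \diag{d_1, \ldots, d_n}$, $d_i \ne 0$, and $S, T$ unimodular over $\OpFr$. Right-multiplication by $T$ identifies $\overline{\MM}$ with $\bigoplus_{i=1}^n \OpFr/\OpFr d_i \oplus \OpFr^{1\times m}$. Since each $d_i$ is nonzero (full row rank of $F$), the cyclic factor $\OpFr/\OpFr d_i$ is $\OpFr$-free precisely when $d_i$ is a unit. Hence freeness of $\overline{\MM}$ is equivalent to all $d_i$ being units, \ie{} to $F$ being hyper-regular.

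For (\ref{thm:piflat_smod.ii})$\Rightarrow$(\ref{thm:piflat_smod.i}), I would pick a basis $\bar y_1, \ldots, \bar y_m$ of $\overline{\MM}$, lift it to rows $P' \in \Mat{\OpFr}{m}{(n+m)}$, and observe that $\OpFr^{1\times(n+m)} = \OpFr^{1\times m}P' \oplus \OpFr^{1\times n}F$ (the sum is direct by the basis property and the full row rank of $F$). Consequently, the square matrix $G = \binom{P'}{F}$ is unimodular over $\OpFr$. Writing $G^{-1} = (Q \mid Z)$ yields $P'Q = \ID[m]$, $FQ = \ZERO$ and $QP' + ZF = \ID[n+m]$, so $\bar P = P'$ and $\bar Q = Q$ satisfy $\bar P \bar Q = \ID[m]$ and $\xi = Q(P'\xi) = \bar Q(\bar P \xi) \in \bar Q \Sig^m$ for every $\xi \in \ker F$. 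Remark~\ref{rem:piobar} then furnishes the denominator $\pi$.

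The converse (\ref{thm:piflat_smod.i})$\Rightarrow$(\ref{thm:piflat_smod.iii}) is where I expect the main obstacle. Starting from $\bar P, \bar Q$ over $\OpFr$ with $\bar P \bar Q = \ID[m]$ and $\bar Q \Sig^m = \Beh$, the richness of $\Sig$ forces $F\bar Q = \ZERO$ as a matrix over $\OpFr$ (otherwise a non-zero entry would act non-trivially on some compactly supported smooth $\eta$). Via the Smith decomposition $SFT = (D, \ZERO)$, the columns of $\bar Q$ must lie in the module-theoretic right kernel of $F$, which equals $T_2\OpFr^m$ (with $T_2$ the last $m$ columns of $T$), so $\bar Q = T_2 V$ for some $V \in \Mat{\OpFr}{m}{m}$; the identity $\bar P T_2 V = \ID[m]$ then forces $V$ to be unimodular, whence $\bar Q \Sig^m = T_2 \Sig^m$. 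On the other hand, $\ker F \cap \Sig^{n+m}$ also contains every $T\binom{\eta_1}{\eta_2}$ with $D\eta_1 = 0$ and $\eta_1 \in \Sig^n, \eta_2 \in \Sig^m$. The main obstacle is to verify carefully that if any $d_i$ were a non-unit then $d_i \eta = 0$ would admit non-zero $\Sig$-solutions (e.g.\ step functions when $d_i = \partial$, or truncated classical ODE solutions in general), yielding elements of $\ker F \cap \Sig^{n+m}$ outside $T_2 \Sig^m = \bar Q \Sig^m$ and contradicting the hypothesis. This forces every $d_i$ to be a unit, \ie, $F$ is hyper-regular.
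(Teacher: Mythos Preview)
Your treatment of (\ref{thm:piflat_smod.iii})$\Leftrightarrow$(\ref{thm:piflat_smod.ii}) via the Smith--Jacobson decomposition and Lemma~\ref{lem:mod} matches the paper's argument exactly. Your route (\ref{thm:piflat_smod.ii})$\Rightarrow$(\ref{thm:piflat_smod.i})---lifting a basis of $\overline{\MM}$ to rows $P'$, observing that $G=\bigl(\begin{smallmatrix}P'\\F\end{smallmatrix}\bigr)$ induces an isomorphism of $\RV{\OpFr}{(n+m)}$ and hence is unimodular (using that $\OpFr$, as a two--sided Noetherian domain, is stably finite), and reading off $\bar Q$ and the relations $F\bar Q=\ZERO$, $\bar P\bar Q=\ID[m]$, $\bar Q\bar P+ZF=\ID[n+m]$ from $G^{-1}$---is different from the paper, which instead passes through (\ref{thm:piflat_smod.iii}) and extracts $\bar Q$, $\bar P$ from the last $m$ columns of a unimodular $\bar W$ with $F\bar W=(\ID[n],\ZERO)$ and the last $m$ rows of $\bar W^{-1}$. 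Your argument is clean and has the advantage of producing directly the resolution identity $\bar Q\bar P+ZF=\ID$ that gives $\Beh\subseteq\bar Q\CV{\Sig}m$.

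The genuine gap is in (\ref{thm:piflat_smod.i})$\Rightarrow$(\ref{thm:piflat_smod.iii}). Your plan relies on the claim that every non-unit $d\in\OpFr$ admits a non-zero solution of $d\eta=0$ in $\Sig$, but this fails for advance-dominated operators. Take $d=\delta\partial-1$ (a non-unit, $\pdeg d=1$): the equation reads $\eta(t)=\dot\eta(t-\tau)$; if $\eta\in\Sig$ vanishes for $t<t_0$ then so does $\dot\eta$, forcing $\eta(t)=0$ for $t<t_0+\tau$, and by induction $\eta\equiv 0$. The left-vanishing constraint built into $\Sig$ thus kills precisely the ``truncated ODE solutions'' you were counting on, so no contradiction arises from your comparison of $\bar Q\CV{\Sig}m=T_2\CV{\Sig}m$ with $\ker F$. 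The paper does \emph{not} argue via solvability of $d_i\eta=0$; for this implication it instead observes that $\bar P\bar Q=\ID[m]$ makes the short exact sequence $0\to\CV{\Sig}m\xrightarrow{\bar Q}\CV{\Sig}{n+m}\xrightarrow{F}\im F\to 0$ split and then invokes an external result to produce an $\OpFr$-matrix $\bar E$ with $F\bar E=\ID[n]$, whence hyper-regularity. If you want to pursue your direct strategy, the obstacle you flagged is real and your current sketch does not overcome it.
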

\color{new}
\begin{rem}
Note that in the linear time-invariant case Definition \ref{def:piflatness} is equivalent to the definitions of \cite{Mounier_95,Chyzak_05}. Moreover (ii) means that System  \eqref{sysF} is $\pi$-flat if, and only if, it is $\OpFr$-torsion free controllable in the terminology of \cite{Mounier_95}, i.e.  $\overline{\MM}$ does not contain $\OpFr$-torsion element.\\
Note that Theorem~\ref{thm:piflat_smod} gives a precise meaning of the underlying controllability property:  every system trajectory, defined on a finite time horizon, in $\mathcal{S}^{n+m}$, is freely parameterized by a $\pi$-flat output trajectory in $C^{\infty}([t_{0},+\infty[)$, a finite number of its derivatives, and advances and delays in finite number, according to (5), Proposition~\ref{outputtrans:prop}, and the discussion on Laurent series of Section~\ref{sec:motplan}.
\end{rem}
\normalcolor
\begin{rem}
Note that this $\pi$-flatness property is independent of the choice of signal space $\Sig$. 
\end{rem}

For the proof we need the following lemma:
\begin{lem}\label{lem:mod}
  Let $\Rcal$ be an arbitrary ring and $M \in \Mat{\Rcal}pq$, and
  let $S \in \MatGr{\Rcal}p$ and $T \in \MatGr{\Rcal}q$ be
  unimodular. Then 
  \begin{math}
    \Mod[\Rcal]{M}pq \cong \Mod[\Rcal]{(SMT)}pq
  \end{math}
  as left $\Rcal$-modules.
\end{lem}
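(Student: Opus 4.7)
The plan is to realize the isomorphism explicitly by right multiplication with the unimodular matrix $T$. First I would unpack the quotients: by definition $\Mod[\Lambda]{M}pq = \RV{\Lambda}q / \RS[\Lambda]{M}q$ and similarly $\Mod[\Lambda]{(SMT)}pq = \RV{\Lambda}q / \RS[\Lambda]{SMT}q$, and both are viewed as left $\Lambda$-modules via the entrywise left action.

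Next I would identify the two row-submodules. Because $S\in\MatGr{\Lambda}p$, left multiplication by $S$ is a bijection of $\RV{\Lambda}p$ onto itself, so $\RV{\Lambda}pS=\RV{\Lambda}p$. Consequently
\begin{equation*}
  \RS[\Lambda]{(SMT)}q = \RV{\Lambda}p(SMT) = (\RV{\Lambda}pS)MT = \RV{\Lambda}pMT = \bigl(\RS[\Lambda]{M}q\bigr)T.
\end{equation*}
Thus the row-space of $SMT$ is obtained from the row-space of $M$ by right multiplication with $T$.

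Then I would define $\varphi\colon \RV{\Lambda}q \to \RV{\Lambda}q$ by $v\mapsto vT$. Since $T$ is unimodular, $\varphi$ is bijective with inverse $w\mapsto wT^{-1}$. Right multiplication by a matrix commutes with the left action of $\Lambda$ on row vectors, so $\varphi$ is a left $\Lambda$-module isomorphism. By the identity displayed above, $\varphi$ restricts to a bijection of $\RS[\Lambda]{M}q$ onto $\RS[\Lambda]{(SMT)}q$. Passing to quotients, $\varphi$ induces the desired isomorphism of left $\Lambda$-modules.

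No serious obstacle is anticipated; the only point to watch is making sure that the module action one works with (left, via scalars acting on entries) genuinely commutes with right multiplication by matrices, so that $\varphi$ is left $\Lambda$-linear. Note that the argument does not use any property of $\Lambda$ beyond being a ring, which matches the generality stated in the lemma.
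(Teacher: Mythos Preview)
Your argument is correct and follows essentially the same route as the paper: both use the automorphism $\varphi\colon v\mapsto vT$ of $\RV{\Lambda}q$ and the identity $\RS[\Lambda]{SMT}p=(\RS[\Lambda]{M}p)T$ coming from unimodularity of $S$. The only cosmetic difference is that the paper phrases the final step via the first isomorphism theorem applied to $\pi\circ\varphi$ (computing its kernel as $\varphi^{-1}(\RS[\Lambda]{SMT}p)=\RS[\Lambda]{M}p$), whereas you pass to quotients directly after observing that $\varphi$ carries one submodule onto the other.
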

\proof
  The map $\alpha = v \mapsto vT$ is an automorphism of
  $\RV{\Rcal}q$ because $T$ is unimodular. The kernel of the
  composition $\beta\circ\alpha$ of the projection $\beta\colon
  \RV{\Rcal}q \to \Mod[\Rcal]{SMT}pq$ with $\alpha$ is
  $\alpha^{-1}(\RS[\Rcal]{SMT}p) = \RS[\Rcal]{SM}p =
  \RS[\Rcal]{M}p$ where the second identity follows from the
  unimodularity of $S$. Since $\beta\circ\alpha$ is also surjective, by
  the first isomorphism theorem for modules (see, \eg,
  \cite[Theorem~1.17]{Cohn_00}) there is an isomorphism
  \begin{math}
    \Mod[\Rcal]{M}pq \to \Mod[\Rcal]{(SMT)}pq.
  \end{math}\qed

\proof[of Theorem~\ref{thm:piflat_smod}]
  We prove first that \eqref{thm:piflat_smod.i} is equivalent to
  \eqref{thm:piflat_smod.iii}. If the system $F\xi = 0$ is $\pi$-flat,
  there are matrices $\bar{P} \in \Mat{\OpFr}m{(n+m)}$ and $\bar{Q}
  \in \Mat{\OpFr}{(n+m)}m$ with common denominator $\pi \in
  K[\delta]\setminus\{0\}$ such that $\bar{Q} \CV{\Sig}m = \Beh = \ker
  F$ and $\bar{P} \, \bar{Q} = \ID[m]$. That means that $\bar{Q}$ is
  injective.
  \color{new}
  In mathematical terminology, injectivity of
  $\bar{Q}$, surjectivity of $F$ (onto $\im F$ in our case) and $\im
  \bar{Q} = \ker F$ are usually abbreviately expressed by saying
  that the following diagram is \emph{exact} (see, \eg,
  \cite[Page~120]{Lang_02}):
  \normalcolor
  \begin{center}
    \begin{tikzpicture}[black]
      \matrix (A) [matrix of math nodes, column sep=3.9em] {
        0 &[-2em] \Sig^m & \Sig^{n+m} & \im F &[-2em] 0. \\
      };
      \path[->, font=\scriptsize] 
      (A-1-1) edge (A-1-2)
      (A-1-4) edge (A-1-5)
      (A-1-2) edge node[above] {$\bar{Q}$} (A-1-3)
      (A-1-3) edge node[above] {$F$} (A-1-4);
      \path[transform canvas={yshift=-0.6ex},
      ->,bend left=19,font=\scriptsize] 
      (A-1-3) edge node[below] {$\bar{P}$} (A-1-2)
      (A-1-4) edge[dashed] node[below] {$\bar{E}$} (A-1-3);
    \end{tikzpicture}
 \end{center}

  Since $\bar{P} \, \bar{Q} = \ID$, by the splitting lemma (see,
  \eg,~\cite[Prop.~3.2]{Lang_02}) one can show
  that there is a matrix $\bar{E} \in \Mat{\OpFr}k{n}$ such that $F
  \bar{E} = \ID[n]$ where $n$ is the (left) rank of $\im F$ because
  the rows of $F$ are linearly independent by assumption. Therefore
  $F$ is hyper-regular. 

  Conversely, let $F$ be hyper-regular. Using Lemma~\ref{lem:hyperr}
  we may thus compute $\bar{W} \in \MatGr{\OpFr}{n+m}$ such that $F
  \bar{W} = (\ID[n], \ZERO)$. If we let $\bar{Q}$ be the last $m$
  columns of $\bar{W}$ and $\bar{P}$ the last $m$ rows of
  $\bar{W}^{-1}$, then we have $F \bar{Q} = \ZERO$ and $\bar{P}\bar{Q}
  = \ID[m]$. Extracting a common denominator $\pi$ of $\bar{P}$ and
  $\bar{Q}$, we have proved that $F \xi = 0$ is $\pi$-flat.

  \medskip
  
  Next we show that \eqref{thm:piflat_smod.iii} is equivalent to
  \eqref{thm:piflat_smod.ii}. Assume first that $F$ is
  hyper-regular. Since the unit vectors are in the column space of
  $F$, by column-reduction we obtain an invertible matrix $\bar{T} \in
  \MatGr{\OpFr}{n+m}$ such that $F \bar{T} = (\ID[n], \ZERO)$. By
  Lemma~\ref{lem:mod}, this means that
  \begin{equation*}
  \overline{\MM} =  \Mod[\OpFr]{F}{n}{(n+m)}
    \cong  \Mod[\OpFr]{F \bar{T}}{n}{(n+m)}
    \cong  \Mod[\OpFr]{(\ID[n], \ZERO)}{n}{(n+m)}
    \cong  \RV{\OpFr}m
  \end{equation*}
  which is free.

  Conversely, assume that $\overline{\MM}$ is
  free. Using, \eg, the method in~\cite[Thm.~8.1.1]{Cohn_85} we may
  obtain unimodular matrices $\bar{S} \in \MatGr{\OpFr}{n}$ and
  $\bar{T} \in \MatGr{\OpFr}{n+m}$ such that $\bar{S}F \bar{T} =
  (\bar{\Delta}, \ZERO)$ where $\bar{\Delta} = \diag{a_1,\ldots,a_{n}}
  \in \Mat{\OpFr}{n}{n}$ is a diagonal matrix. By Lemma~\ref{lem:mod},
  \begin{equation*}    
   \overline{\MM} = \Mod[\OpFr]{F}{n}{(n+m)} 
    \cong  \Mod[\OpFr]{\bar{S} F \bar{T}}{n}{(n+m)} 
    \cong  \Mod[\OpFr]{(\bar{\Delta}, \ZERO)}{n}{(n+m)} 
    \cong  \bigoplus_{j=1}^{n} \frac{\OpFr}{\OpFr a_j} \oplus \RV{\OpFr}m.
  \end{equation*}
  Since this module is free by assumption we conclude that all $a_j$
  must be units, \ie, we may assume \WLOG\ that $\bar{\Delta} =
  \ID[n]$. By Definition~\ref{def:hyperr}, $F$ is
  hyper-regular. \qed

\begin{alg}[Computation of a $\pi$-flat output]\label{alg:Fx=0}~
  \begin{description}
  \item[Input:] A matrix $F\in \Mat{\Op}n{(n+m)}$ representing the system \eqref{sysF}.
  \item[Output:] An Ore polynomial $\pi \in K[\delta]$ together with
    matrices $P \in \Mat{\Op}m{(n+m)}$ and $Q \in \Mat{\Op}{(n+m)}m$
    as in Definition \ref{def:piflatness}
    or \textsc{fail} if such matrices do not exist.
  \item[Procedure:]\ \vspace{-0.3cm}
    \begin{enumerate}
    \item Use column-reduction to check whether $F$ is
      hyper-regular. If not, then return \textsc{fail}.
    \item Else, let $\bar{W} \in \MatGr{\OpFr}{n+m}$ be such that
      \begin{equation*}
        F\bar{W} =
        \begin{pmatrix}
          \ID[n] & \ZERO[n,m]
        \end{pmatrix}.
      \end{equation*}
    \item Let
      \begin{equation*}
        \bar{Q} \triangleq \bar{W}
        \begin{pmatrix}
          \ZERO[n,m] \\ \ID[m]
        \end{pmatrix}
        \qqtext{and}
        \bar{P} \triangleq
        \begin{pmatrix}
          \ZERO[m,n] & \ID[m]
        \end{pmatrix}
        \bar{W}^{-1}.
      \end{equation*}
    \item Let $\pi \in K[\delta]$ be a common denominator of
      $\bar{P}$ and $\bar{Q}$.
    \item Set 
      $P \triangleq \pi\bar{P} \in \Mat{\Op}m{(n+m)}$ 
      and 
      $Q \triangleq \pi\bar{Q} \in \Mat{\Op}{(n+m)}m$.
    \item Return $\pi$, $P$ and $Q$.
    \end{enumerate}
  \end{description}
\end{alg}

\begin{rem}
In the above algorithm we exploit the fact that $\bar{W}^{-1}$ can be computed at the same time as $\bar{W}$ by inverting the elementary actions that compose $\bar{W}$.
\end{rem}

\subsection{$\pi$-0-flatness}

In contrast to the considerations of the previous subsection, it is sometimes necessary to keep the state and input variables separate: in  the theory of linear time-invariant systems, controllability is equivalent to the existence of Brunovsk\'{y}'s canonical form (see e.g. \cite{Brunovsky_70,Kailath_79}); the interpretation of some of their states as flat output (see e.g. \cite{Fliess_95}) shows that flat outputs do not need to depend on $u$, \ie \  there exist $P\in \RR^{m\times n}$ and $Q\in \RR^{n\times m}$ such that $y=Px$, $x=Qy$ and $PQ=1_m$. This property is called 0-flatness. The fact that $u$ can be expressed as a function of $y$ is, in this case, an immediate consequence of the system equation with $x=Qy$.
More generally:
 
\begin{defn}\label{def:pikflatness}
We say that a system is $\pi$-$k$-flat, with $k\geq 1$, if and only if there exists a $\pi$-flat output $y$ such that the maximal degree with respect to $\partial$ of the matrix $P\begin{pmatrix} \ZERO[n-m,m]\\\ID_{m}\end{pmatrix}$ is equal to $k-1$. 

We set $k=0$, by convention, if $P\begin{pmatrix} \ZERO[n-m,m]\\\ID_{m}\end{pmatrix}= 0$, \ie \  $y$ does not depend on $u$.
\end{defn}

Note that $\pi$-0-flatness is equivalent to the existence of $(\pi, P, Q)$ as in Definition~\ref{def:piflatness} such that $P= \begin{pmatrix} P_{1}&0_{m,m}\end{pmatrix}$ with $P_{1} \in \Op^{m\times n}$ and $\pi^{-1} P_{1}\pi^{-1}Q_{1}= \ID[m]$ where $Q_{1}\triangleq \begin{pmatrix}\ID[n]&0_{n,m}\end{pmatrix}Q$.

A linear flat system is not necessarily 0-flat, as shown by the following elementary example:
\begin{ex}\label{1flat:ex}
Consider the system
$$
\begin{pmatrix}
1&-\partial
\end{pmatrix}\begin{pmatrix}x\\u\end{pmatrix} =0
$$

or 
$$
x=\dot{u}.
$$
It can be easily seen that $y=u$ is a flat output. Hence the system is 1-flat.
\end{ex}
\begin{rem}\label{1flat:rem}
Note that elementary considerations show that 1-flatness is not preserved under the action of the group of unimodular matrices. Indeed, setting $\begin{pmatrix}x'\\u'\end{pmatrix}=\begin{pmatrix}0&1\\1&0\end{pmatrix}\begin{pmatrix}x\\u\end{pmatrix}$ in the previous example, we get the system 
$\begin{pmatrix}
-\partial&1
\end{pmatrix}\begin{pmatrix}x'\\u'\end{pmatrix} =0$, or $-\dot{x}'=u'$, which admits $y'=x'$ as flat output and is thus obviously 0-flat.
However, if we restrict the  transformation group to be $\MatGr{\OpFr}{n}\otimes\MatGr{\OpFr}{m}$, a group preserving the control variables, then 1-flatness is preserved.
\end{rem}

Example~\ref{1flat:ex} and Remark~\ref{1flat:rem} thus show that obtaining a characterization of $\pi$-0-flat systems is of interest.

\begin{lem}[Elimination]
If $B$ in \eqref{sys_lin_2} is hyper-regular, there exists $\tilde{M}$, left-inverse of $B$, such that System \eqref{sys_lin_2} can be decomposed according to
\begin{equation}\label{sys_lin_elim}
\begin{pmatrix}
\tilde{R}\\
\varphi^{-1} F
\end{pmatrix}
x
=\tilde{M}Ax=\tilde{M}Bu
=
\begin{pmatrix}
\ID[m]\\
\ZERO[(n-m),m]
\end{pmatrix}
u.
\end{equation}
where ${F} \in \Mat{\Op}{(n-m)}n$ and $\varphi\in K[\delta]$ with $\varphi\neq0$. 

\end{lem}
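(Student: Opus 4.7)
The plan is to invoke part (ii) of the Proposition in Section~\ref{mat-hyp-reg:sec} to manufacture a unimodular left multiplier that triangularises $B$, apply it to the system equation, and then absorb the denominators appearing in the block that annihilates $u$ into a single polynomial $\varphi \in K[\delta]$.

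More concretely, since $B \in \Mat{\Op}{n}{m}$ is hyper-regular and since the shape of the right-hand side in the statement forces $n \geq m$, part (ii) of that Proposition produces a unimodular matrix $\tilde{M} \in \MatGr{\OpFr}{n}$ with
\begin{equation*}
  \tilde{M} B = \begin{pmatrix} \ID[m] \\ \ZERO[(n-m),m] \end{pmatrix}.
\end{equation*}
Because $\Sig$ is an $\OpFr$-module (Section~\ref{sec:sigspace}), we may left-multiply both sides of~\eqref{sys_lin_2} by $\tilde{M}$, obtaining $\tilde{M} A x = \tilde{M} B u = \begin{pmatrix} \ID[m] \\ \ZERO[(n-m),m] \end{pmatrix} u$. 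This already delivers the middle and right-hand equalities of the stated decomposition.

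It then remains to split $\tilde{M} A \in \Mat{\OpFr}{n}{n}$ into its first $m$ rows, which I will call $\tilde{R} \in \Mat{\OpFr}{m}{n}$, and its last $n-m$ rows $\tilde{F} \in \Mat{\OpFr}{(n-m)}{n}$. The only real subtlety, and the step I expect to be the main obstacle, is that $\tilde{F}$ lives a priori in $\OpFr$ rather than in $\Op$: its entries are polynomials in $\partial$ whose coefficients lie in the skew field $K(\delta)$. However, since $K[\delta]$ is a (left) Ore domain and $\partial$ commutes with $\delta$, one can extract a common left denominator $\varphi \in K[\delta] \setminus \{0\}$ of the finitely many entries of $\tilde{F}$, in exactly the same spirit as the common denominator argument used in Remark~\ref{rem:piobar} via \cite[Prop.~5.3]{Cohn_00}. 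Setting $F \triangleq \varphi \tilde{F} \in \Mat{\Op}{(n-m)}{n}$, we obtain $\tilde{F} = \varphi^{-1} F$, hence
\begin{equation*}
  \tilde{M} A = \begin{pmatrix} \tilde{R} \\ \varphi^{-1} F \end{pmatrix},
\end{equation*}
which completes the decomposition.
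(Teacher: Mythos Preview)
Your argument is correct and matches exactly the reasoning the paper relies on implicitly: the paper states this lemma without an explicit proof, but the construction you give---produce $\tilde{M}\in\MatGr{\OpFr}{n}$ from the hyper-regularity characterisation, left-multiply~\eqref{sys_lin_2}, then clear a common left denominator $\varphi\in K[\delta]$ from the bottom $(n-m)$ rows of $\tilde{M}A$---is precisely what is used in Algorithm~\ref{alg:fo2} and in the proof of Theorem~\ref{thm:2}. There is nothing to add.
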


\begin{rem}
Let $\bar{F}\in\Mat{\OpFr}{p}{q}$. Let $\varphi\in K[\delta]$ be its common denominator. We have
$\bar{F}=\varphi^{-1}F$ with $F\in\Mat{\Op}pq$. Thus, $\ker \bar{F}=\ker F$. 
\end{rem}

\begin{thm}\label{thm:2}
If $B$ is hyper-regular, we have the following equivalences:
\begin{itemize}
\item[(i)] The control system \eqref{sys_lin_2} is $\pi$-0-flat; 
\item[(ii)] The extended system module $\Mod[\OpFr]F{(n-m)}n$ is free, with $F$ defined in \eqref{sys_lin_elim};
\item[(iii)] $F$ is hyper-regular over~$\OpFr$.
\end{itemize}
\end{thm}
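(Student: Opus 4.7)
The plan is to reduce the three-way equivalence to Theorem~\ref{thm:piflat_smod} applied this time to the sub-system $Fx = 0$ rather than to the full system $(A,-B)\xi = 0$. The hyper-regularity of $B$ is used precisely to decouple $u$ from $x$: by the Elimination Lemma, the system rewrites equivalently as $Fx = 0$ together with $u = \tilde R x$, so that the $x$-projection of $\Beh$ coincides with $\ker F$ and the full behavior is recovered from $\ker F$ by a matrix over $\OpFr$. Moreover, since $(A,-B)$ has full row rank $n$ and $-\tilde M B = \begin{pmatrix} -\ID[m] \\ \ZERO \end{pmatrix}$ provides $m$ linearly independent columns in the transformed system matrix, the block $F$ automatically has full (left) row rank $n-m$, which is the hypothesis needed to invoke Theorem~\ref{thm:piflat_smod} on $F$.

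The equivalence (ii)$\iff$(iii) is then a direct translation of the corresponding argument at the end of the proof of Theorem~\ref{thm:piflat_smod}. Using Smith--Jacobson reduction of $F$ over the principal ideal domain $\OpFr$ together with Lemma~\ref{lem:mod}, one obtains
\begin{equation*}
\Mod[\OpFr]{F}{(n-m)}{n} \cong \bigoplus_{j=1}^{n-m} \OpFr/\OpFr a_j \oplus \RV{\OpFr}{m},
\end{equation*}
and freeness of this module amounts to all the $a_j$ being units, which is precisely Definition~\ref{def:hyperr} of hyper-regularity for $F$.

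For (iii)$\Rightarrow$(i), I would apply Theorem~\ref{thm:piflat_smod} to $Fx = 0$ to obtain $\bar P_1 \in \Mat{\OpFr}{m}{n}$ and $\bar Q_1 \in \Mat{\OpFr}{n}{m}$ with $\bar Q_1 \Sig^m = \ker F$ and $\bar P_1 \bar Q_1 = \ID[m]$, set $y \triangleq \bar P_1 x$ (visibly independent of $u$), and assemble
\begin{equation*}
\bar Q \triangleq \begin{pmatrix} \bar Q_1 \\ \tilde R \bar Q_1 \end{pmatrix}, \qquad \bar P \triangleq \begin{pmatrix} \bar P_1 & \ZERO[m,m] \end{pmatrix},
\end{equation*}
so that $\bar Q \Sig^m = \Beh$ follows from the decomposition and $\bar P \bar Q = \bar P_1 \bar Q_1 = \ID[m]$. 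Clearing a common denominator $\pi \in K[\delta]$ then supplies the data of Definition~\ref{def:pikflatness} with $k=0$. Conversely, for (i)$\Rightarrow$(iii), write the $\pi$-0-flat output data as $P = (P_1, \ZERO)$ and $Q = \begin{pmatrix} Q_1 \\ Q_2 \end{pmatrix}$; projecting $\pi^{-1} Q \Sig^m = \Beh$ onto the $x$-component yields $\pi^{-1} Q_1 \Sig^m = \ker F$, and $\pi^{-1} P_1 \pi^{-1} Q_1 = \ID[m]$ is given. This is exactly the $\pi$-flatness of the sub-system $Fx = 0$ in the sense of Theorem~\ref{thm:piflat_smod}, hence $F$ is hyper-regular over $\OpFr$.

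The main difficulty is bookkeeping rather than conceptual: one must check that the common denominator $\pi$ produced for $Fx = 0$ can be enlarged to also absorb the denominators appearing in the block $\tilde R \bar Q_1$ that expresses $u$ in terms of $y$, and---exactly as in the proof of Theorem~\ref{thm:piflat_smod}---one must justify passing between matrix identities over $\OpFr$ and behavioral identities over $\Sig$, which is where the richness of the signal space $\Sig$ is used.
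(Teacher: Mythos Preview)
Your proposal is correct and follows essentially the same route as the paper: reduce to Theorem~\ref{thm:piflat_smod} applied to the sub-system $Fx=0$, build $\bar P=(\bar P_1,\ZERO)$ and $\bar Q=\bigl(\begin{smallmatrix}\bar Q_1\\ \tilde R\bar Q_1\end{smallmatrix}\bigr)$ for (iii)$\Rightarrow$(i), and project onto the $x$-component for (i)$\Rightarrow$(iii). If anything, you are slightly more explicit than the paper---you justify that $F$ has full row rank and you spell out that both inclusions $\bar Q\Sig^m\subseteq\Beh$ and $\ker F\subseteq \pi^{-1}Q_1\Sig^m$ need checking, whereas the paper only writes out the computation~\eqref{eq:thm2} in one direction and leaves the surjectivity in (i)$\Rightarrow$(iii) implicit.
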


\proof
  Let $F$ be hyper-regular over $\OpFr$. From representation
  (\ref{sys_lin_elim}) combined with Theorem~\ref{thm:piflat_smod},
  there exists $\bar{P}_{1}\in \Mat{\OpFr}mn$ and $\bar{Q}_{1}\in
  \Mat{\OpFr}nm$ such that $F\bar{Q}_{1}= \ZERO$ and
  $\bar{P}_{1}\bar{Q}_{1}= \ID[m]$. Thus,
  $\bar{P}\triangleq\begin{pmatrix}\bar{P}_{1}, \ZERO\end{pmatrix}$ and
  $\bar{Q}\triangleq\begin{pmatrix}\bar{Q}_{1}\\\tilde{R}\bar{Q}_{1}\end{pmatrix}$
  satisfy $\bar{P}\bar{Q} = \bar{P}_1\bar{Q}_1 = \ID[m]$ and
  \begin{equation}
    \label{eq:thm2}
\tilde{M} \begin{pmatrix}
           A & -B
          \end{pmatrix}
\bar{Q} =
    \tilde{M} \left( A\bar{Q}_{1} - B\tilde{R}\bar{Q}_{1} \right) 
    = \begin{pmatrix}\tilde{R}\\ \varphi^{-1}F\end{pmatrix} \bar{Q}_{1}
    - \begin{pmatrix}\ID[m]\\0\end{pmatrix}\tilde{R}\bar{Q}_{1} 
    = \ZERO.
  \end{equation}
  Equation~\eqref{eq:thm2} implies that $\bar{Q}\CV{\Sig}m \subseteq \Beh$.
  To prove the other inclusion, let $\gamma = \left(\begin{smallmatrix}\gamma_1 \\ 
  \gamma_2 \end{smallmatrix}\right) \in \Beh$. Then analogously
  \begin{equation*}
   0 = \tilde{M} \begin{pmatrix} A & -B \end{pmatrix} \gamma
   = \begin{pmatrix} \tilde{R} \\ \varphi^{-1}F \end{pmatrix} \gamma_1
   - \begin{pmatrix} \ID[m] \\ \ZERO \end{pmatrix} \gamma_2.
  \end{equation*}
Since $\ker\varphi^{-1}F=\ker F$, we obtain $\gamma_1 \in \ker F$ and $\tilde{R}\gamma_1 = \gamma_2$. Consequently, there exists $\zeta \in \CV{\Sig}m$
  such that $\gamma_1 = \bar{Q}_1 \zeta$ and thus also $\gamma_2 = \tilde{R}\bar{Q}_1\zeta$. Hence, $\gamma \in \bar{Q}\CV{\Sig}m$ 
and the other inclusion also holds.
Thus, extracting a common denominator
  $\pi$ such that $\bar{P} = \pi^{-1} P$ and $\bar{Q} = \pi^{-1}Q$
  where $P \in \Mat{\Op}mn$ and $Q \in \Mat{\Op}nm$, we see that
  $\pi$, $P$ and $Q$ fulfill Definition~\ref{def:pikflatness}. Thus,
  (iii) implies (i).

  Conversely, if (i) holds with $\pi$, $P$ and $Q$ being as in
  Definition~\ref{def:pikflatness}, then with $Q_1 = (\ID[n],
  \ZERO)Q$ and $P_1 = P \left(\begin{smallmatrix} \ID[n] \\
      \ZERO
    \end{smallmatrix}\right)$, by the same calculation~\eqref{eq:thm2} we
  obtain $\varphi^{-1}F \pi^{-1}Q_1 = \ZERO$ and $\pi^{-1}P_1\pi^{-1}Q_1 =
  \ID[m]$. Thus, $Fx = 0$ is $\pi$-flat. Thus, by
  Theorem~\ref{thm:piflat_smod} we have (iii).
  \qed

If the matrix $B$ of system \eqref{sys_lin_2} is hyper-regular, we can directly check for the existence of a $\pi$-0-flat output, as shown in Theorem \ref{thm:2}. A suitable algorithm is as follows:

\begin{alg}[Computation of a $\pi$-0-flat output]\label{alg:fo2}~
  \begin{description}
  \item[Input:] Matrices $A \in \Mat{\Op}nn$ and $B \in \Mat{\Op}nm$ with $B$ hyper-regular, representing (\ref{sys_lin_2}).

  \item[Output:] If system (\ref{sys_lin_2}) is $\pi$-0-flat, the polynomial $\pi\in K[\delta]$ and the triple $(P,Q,R)$ of the
    matrices $P \in \Mat{\Op}mn$, $Q \in \Mat{\Op}nm$ and $R\in \Mat{\Op}mm$ from Definition \ref{def:piflatness}, or \textsc{fail} if such matrices do not exist.

  \item[Procedure:]\ \vspace{-0.3cm}
  \begin{enumerate}
  \item Compute $\tilde{M} \in \MatGr{\OpFr}n$,
    s.t. $\tilde{M}B=\begin{pmatrix}\ID[m]\\\ZERO[(n-m),m]\end{pmatrix}$.
  \item Write
    \begin{equation*}
      \tilde{M} A =
      \begin{pmatrix}
        \tilde{R} \\ \varphi^{-1}F
      \end{pmatrix}
    \end{equation*}
    where $\tilde{R} \in \Mat{\OpFr}{m}n$ and $F \in \Mat{\Op}{(n-m)}n$.
   \item If Algorithm~\ref{alg:Fx=0} applied to $F$ returns $\pi_1$,
     $P_1$ and $Q_1$, then
     \begin{enumerate}
     \item Set $\bar{P} = (\pi_1^{-1} P_1, \ZERO)$ and
       \begin{equation*}
         \bar{Q} =
         \begin{pmatrix}
           \pi_1^{-1} Q_1 \\ \tilde{R} \pi_1^{-1} Q_1
         \end{pmatrix}.
       \end{equation*}
     \item Let $\pi$ be a common denominator of $\bar{P}$ and
       $\bar{Q}$ and set
       $P = \pi \bar{P} \in \Mat{\Op}mn$
       and
       $Q = \pi \bar{Q} \in \Mat{\Op}nm$.
     \item Return $\pi$, $P$ and $Q$.
     \end{enumerate}
   \item Else, return \textsc{fail}
  \end{enumerate}
  \end{description}
\end{alg}

If the matrix $B$ of system \eqref{sys_lin_2} is not hyper-regular, we can check for $\pi$-flatness by applying Algorithm~\ref{alg:Fx=0}.

\begin{rem}
Note that Algorithms \ref{alg:Fx=0} and \ref{alg:fo2} do not necessarily yield a polynomial $\pi$ of minimal degree in $\delta$. Especially, it is not guaranteed, that we obtain $\pi\in K$ if the system is flat. 
\end{rem}

%
%
\section{Example}\label{ex:sec}

To illustrate the results of the preceding sections and the usefulness of the concept for the feedforward controller design for linear time-delay differential systems, we demonstrate all steps for the system of Example \ref{intro:ex}.
Note that all the necessary computations can be done with a package for the computer algebra system \emph{Maple}, which has been presented in \cite{Antritter_11}. The package can be obtained from the first author upon request.

\subsection{Computation of $\pi$-flat output}\label{sec:flatout}

The matrix $B$ is hyper-regular and following Algorithm \ref{alg:fo2} we compute $\tilde{M}$ such that $\tilde{M}B=\begin{pmatrix}1\\0\end{pmatrix}$. We get
\begin{equation*}
\tilde{M}
=
\begin{pmatrix}
0&\delta^{-1}\\
1&0
\end{pmatrix}.
\end{equation*}
Thus:
\begin{equation*}
\begin{pmatrix}
\tilde{R}
\\
F
\end{pmatrix}
=
\tilde{M}A
=
\begin{pmatrix}
0&\delta^{-1}\partial
\\
\partial&a(\delta^2-\delta)
\end{pmatrix}
\end{equation*}
We now follow Algorithm~\ref{alg:Fx=0} and compute $\bar{W}$ such that $F\bar{W}=\begin{pmatrix}1&0\end{pmatrix}$ to
\begin{equation*}
\bar{W}
=
\begin{pmatrix}
0&1\\
(\delta^2-\delta)^{-1}\frac{1}{a}&-(\delta^2-\delta)^{-1}\frac{1}{a}\partial
\end{pmatrix}.
\end{equation*}
The inverse is
\begin{equation*}
\bar{W}^{-1}
=
\begin{pmatrix}
\partial&a(\delta^2-\delta)\\
1&0
\end{pmatrix}.
\end{equation*}
Subsuming the remaining computations of Algorithm \ref{alg:Fx=0} we directly obtain the following matrices in step 3 a) of Algorithm \ref{alg:fo2}, namely
\begin{equation}\label{pqac}
\bar{P}
=
\begin{pmatrix}
\begin{pmatrix}
0&1
\end{pmatrix}
\bar{W}^{-1}
&
0
\end{pmatrix}
=
\begin{pmatrix}1&0&0\end{pmatrix},
\end{equation}
and
\begin{equation}\label{eq:barQex}
\bar Q
=
\begin{pmatrix}
\ID[2] \\
\tilde R
\end{pmatrix}
\bar{W}\begin{pmatrix}0\\1\end{pmatrix}
=
\begin{pmatrix}
1\\
-(\delta^2-\delta)^{-1}\frac{1}{a}\partial 
\\
-\left(\delta^3-\delta^2\right)^{-1}\left( \frac{1}{a}\partial^2
- \frac{\dot{a}}{a^2}\partial \right)
\end{pmatrix}.
\end{equation}

We readily get $\pi=\delta^3-\delta^2$.

From \eqref{pqac}, a $\pi$-0-flat output is given by $y=x_1$. This can also be deduced from the first entry in $\bar{Q}$. From (\ref{eq:barQex}) we deduce that $x_{2}=-(\delta^2-\delta)^{-1}\left( \frac{1}{a}\dot{y}\right)$ and $u=-\left(\delta^3-\delta^2\right)^{-1}\left( \frac{1}{a}\ddot{y} - \frac{\dot{a}}{a^2}\dot{y} \right)$.

\subsection{Motion Planning}\label{sec:motplan}

In order to deal with the motion planing problem we have to clarify the action of fractions in $\delta$ on signal functions. We note that $\delta$ is an automorphism, by~\cite[Sect.~7.3]{Cohn_03}. Therefore we may
form the ring of formal Laurent series $K\(\delta\)$ in $\delta$ with
coefficients in $K$. That is, $K\(\delta\)$ consists of elements of
the form
\begin{math}
  \sum_{j \geq N} f_j \delta^j
\end{math}
where $N \in \ZZ$ and $f_j \in K$ for all $j \geq N$. Moreover, by
\cite[Prop.~7.3.7]{Cohn_03} we may embed $K(\delta)$ into
$K\(\delta\)$.

The signal space chosen in \eqref{eq:sigsp} is obviously closed under the action of any polynomial of differentiation ($\partial$) and delay ($\delta$) and it contains $\mathcal{C}^\infty$ functions with compact support in $\mathbb{R}$.

Since $\pi\in K[\delta]$, we can compute $\pi^{-1}f$, for $f\in\Sig$, by
developing $\pi^{-1}$ into a formal Laurent series $\pi^{-1} = \sum_{j
  \geq -N} a_j \delta^j$ where $N \in \mathbb{N}$ and $a_j \in
K$ for all $j \geq -N$ (see \ref{app:sec}). Then for any $f\in\Sig$ and $t\in\mathbb{R}$ we have
\begin{equation*}
  (\pi^{-1} f)(t)
  = \sum_{j \geq -N} a_j \delta^j f(t)
  = \sum_{j \geq -N} a_j f(t - j\tau).
\end{equation*}
Since $f(t - j\tau) = 0$ for $j\tau>t - t_0$ all but finitely many of the summands on the
right hand side vanish at any given $t$.
We can check that $\pi^{-1}f$ is again in $\Sig$. Thus, $\Sig$ is also closed under advances defined by inverses of $\delta$-polynomials.

We want to stress that, by the algorithm in \ref{app:sec}, the inverse of the polynomial $\pi$ as well as the above expression for $\pi^{-1} f$, contain at most $N$ predictions, where $N$ corresponds to the smallest degree of $\pi$. Moreover, as pointed out before: for all $f\in\Sig$ we have that $f(t - j\tau) = 0$ for $j\tau>t - t_0$. Therefore, at each time $t$, only a finite number of elements of the Laurent series has actually to be computed.

With these preparations, we can now generate feedforward trajectories corresponding to the following motion planning problem:
we are looking for a trajectory of $(x_{1},x_{2},u)$ starting from $x(0)=(x_1(0), x_2(0))=(0,\;0)$ and arriving at the final state $x(2)=(1,\;0)$ at time $t=2$, while $x_1$ has to be equal to zero before $t=0$ and to 1 after $t=2$ and has to be differentiable everywhere.

In order to compute explicit feedforward state and input trajectories without integrating the system differential-delay equations, we compute a reference trajectory for the flat output $y$, which is not constrained by any differential-delay equation thanks to the freeness of the extended system module, and then deduce the state and input trajectories by $\begin{pmatrix}x\\u\end{pmatrix} =\bar{Q}y$ with $\bar{Q}$ defined by (\ref{eq:barQex}). 
We take $a(t)=t+3$ and thus $\dot{a}(t)=1$. Furthermore, we take $\tau=1$, i.e. $\delta a(t)=a(t-1)$.

In order to obtain an explicit expression for $x_2=-(\delta^2-\delta)^{-1}\left( \frac{1}{a}\dot{y}\right)$, we compute the series expansion using the algorithm in \ref{app:sec} to
\begin{equation*}
\left(\delta^2-\delta\right)^{-1}
=\sum_{j\geq-1}-\delta^j.
\end{equation*}
Taking into account that $x_1$ is required to be constant for $t<0$ and $t>2$, the desired trajectory $y_d$ for $y=x_1$ will be constant at these points of time and thus $\dot{y}_d=\ddot{y}_d=0$ for $t<0$ and $t>2$. This yields
\begin{equation}\label{x2y}
x_{2,d}
=
\left(\sum_{j\geq-1}\delta^j\right)\frac{1}{a}\dot{y}_d=X_{2,d}(t)\chi_{[-1,0[}(t)+(X_{2,d}(t)+X_{2,d}(t+1))\chi_{[0,\infty[}(t+1),
\end{equation}
where $X_{2,d}(t)=\frac{1}{a(t-\lfloor t\rfloor)}\dot{y}_d(t-\lfloor t\rfloor)$, with the notation $\lfloor t \rfloor$ for the largest integer smaller than or equal to $t$, and where $\chi_{[a,b[}(t)$ is the indicator function of the interval $[a,b[$, i.e. equal to 1 on $[a,b[$ and 0 elsewhere.

We make the ansatz
\begin{equation*}
y_d(t)=(a_0+a_1t+a_2t^2+a_3t^3+a_4t^4+a_5t^5)\chi_{[0,2[}(t)+\chi_{[2,\infty[}(t)
\end{equation*}
the coefficients $a_{i}$, $i=0,\ldots, 5$ being such that $y_d(0)=x_{1}(0)=0$, $y_d(2)=x_{1}(2)=1$, $\dot{y}_d(1)= 0$ (since $x_{2}(2)=0$), $\dot{y}_d(0)=0$ and $\dot{y}_d(2)=0$ (to ensure the differentiability of $x_{1}$ at times $t=0$ and $t=2$). We readily get:
\begin{equation}\label{ydes}
y_d(t)=\left(-\frac{45}{4}t^2+\frac{35}{4}t^3-\frac{3}{4}t^5\right)\chi_{[0,2[}(t)+\chi_{[2,\infty[}(t).
\end{equation}
We now insert this expression in  \eqref{x2y} to obtain the corresponding feedforward control.
Since
\begin{equation}
\pi^{-1}=\left(\delta^3-\delta^2\right)^{-1}
=
\sum_{j\geq-2}\delta^{j}
\end{equation}
we get
\begin{equation}
u_d(t)=U_d(t)\chi_{[-2,-1[}(t)+(U_d(t)+U_d(t+1))\chi_{[-1,\infty[}(t),
\end{equation}
where $U_d(t)=-\frac{1}{a(t-\lfloor t\rfloor))}\ddot{y}_d(t-\lfloor t\rfloor)+
\frac{1}{a^2(t-\lfloor t\rfloor)}\dot{y}_d(t-\lfloor t\rfloor)$, which completes the solution of this motion planning problem.
The resulting trajectories are shown in Figure \ref{fig:ex_simres}.
Though the trajectory for $y=x_1$ is of class $C^1$, it is generated by an input $u$ which is quite irregular and complicated. We also notice that $u$ has to start $2\tau$ before the $y$-trajectory in order to satisfy the above requirements, which corresponds to the order of the Laurent series associated to $\pi^{-1}$.  
\begin{figure}[h!]
\begin{center}
\includegraphics[width=5.5cm]{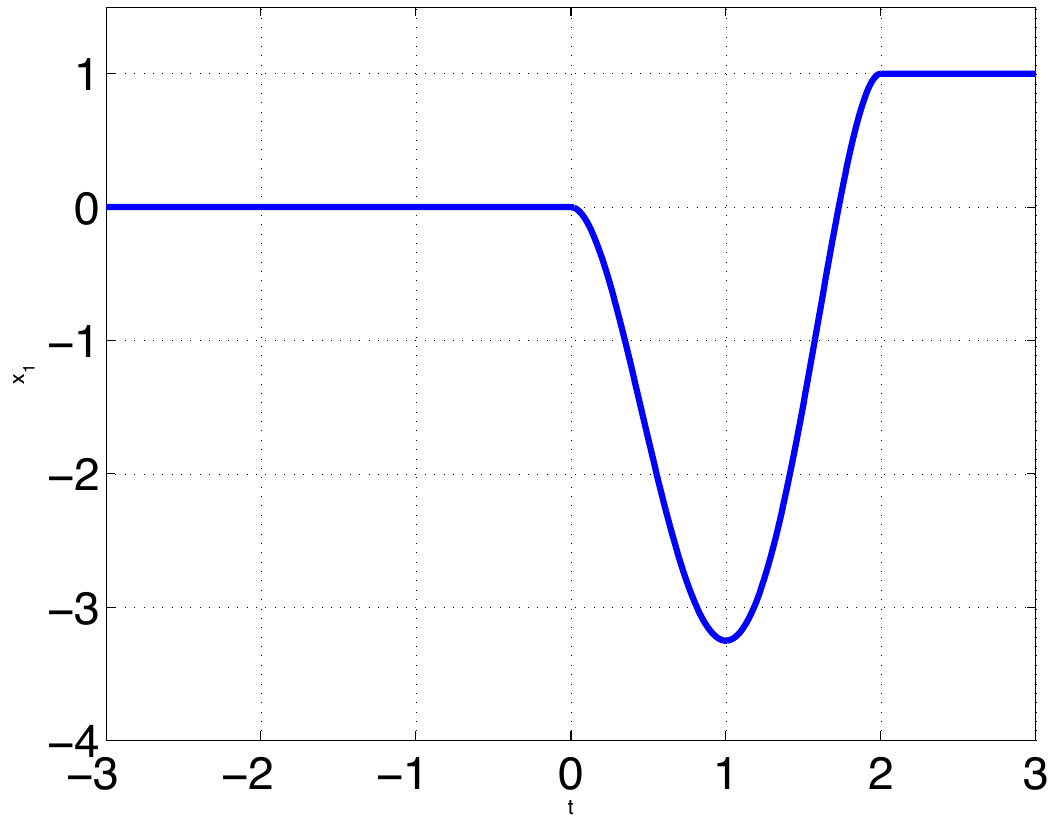}
\hspace{1cm}
\includegraphics[width=5.7cm]{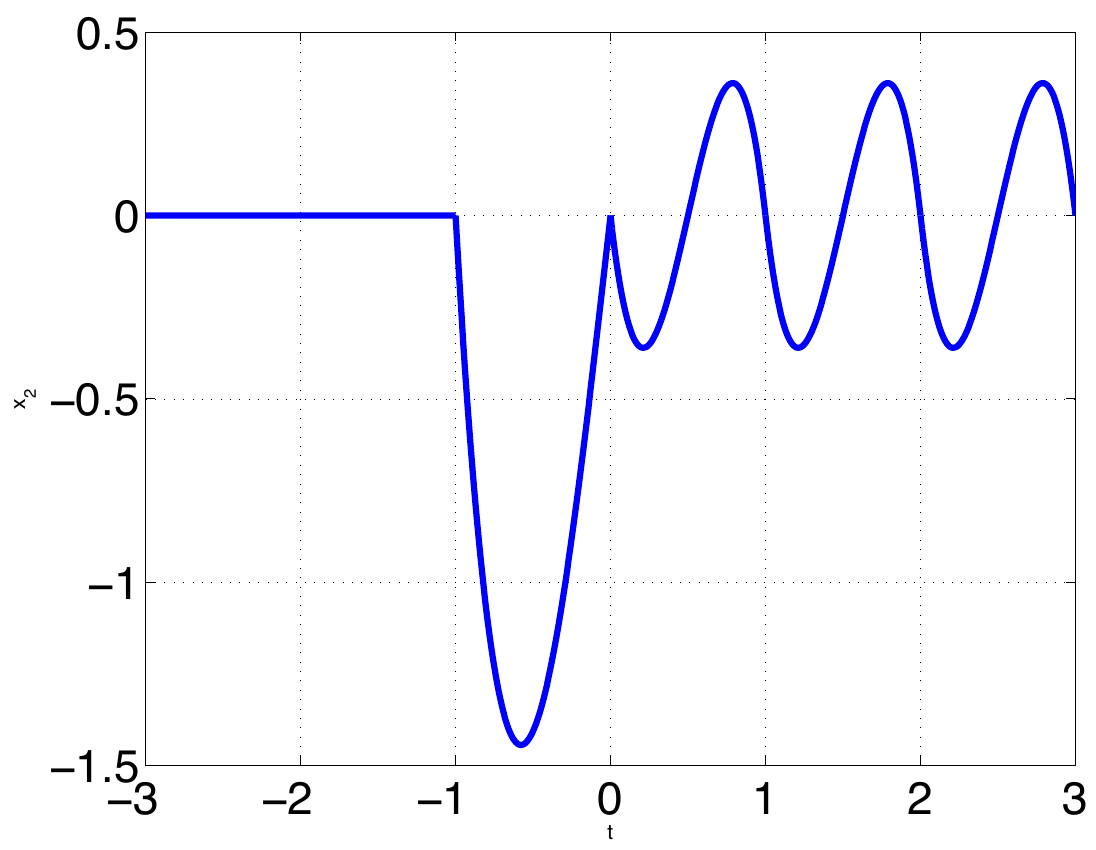}
\\
\includegraphics[width=5.6cm]{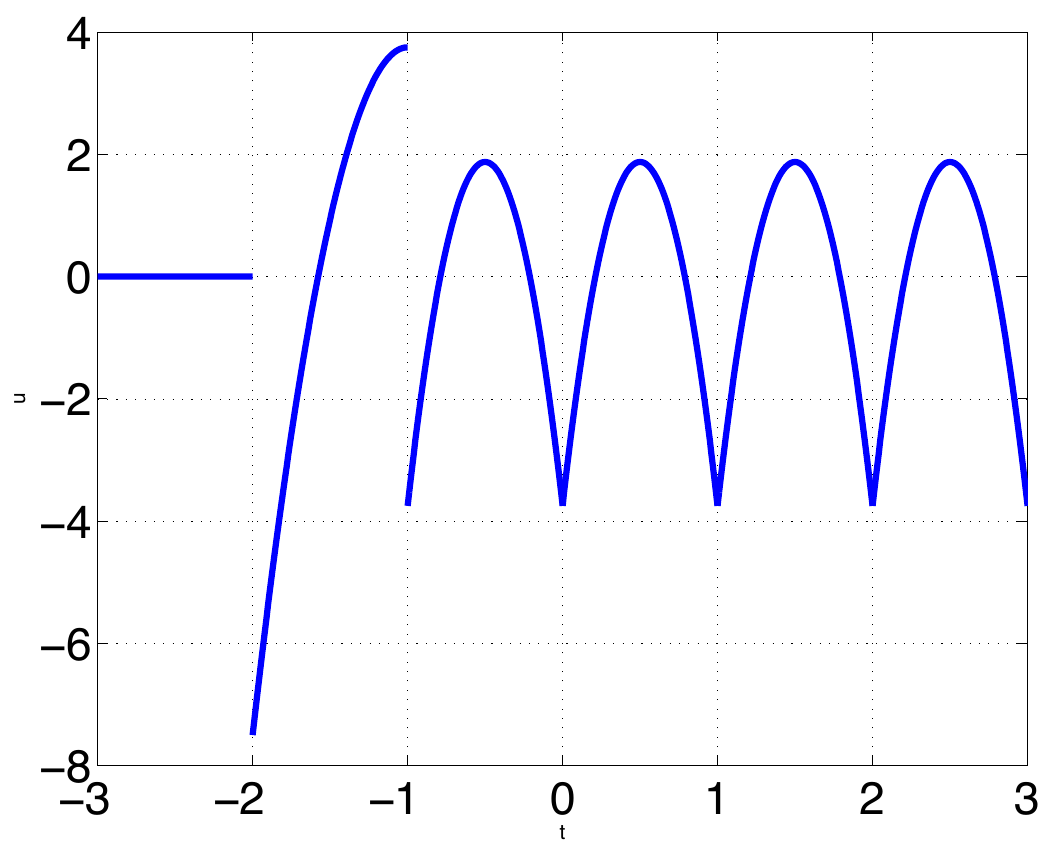}
\end{center}
\caption{Graphs for $x_1=y$, $x_2$ and $u$}
\label{fig:ex_simres}
\end{figure}

\section{Conclusion}
In this paper we have introduced a new definition of $\pi$-flatness and the important particular case of $\pi$-0-flatness. These notions have been characterized by means of the properties of matrices over Ore polynomial rings, such as hyper-regularity or row- and column-reducedness. This characterization has yielded an efficient and simple algorithm to test the existence of $\pi$- and $\pi$-0-flat outputs and compute them, via an algorithm that may be easily implemented in the computer algebra system \textit{Maple} \cite{Antritter_11}\footnote{Concerning a general approach to the implementation of meromorphic functions via Laurent-Puiseux series in Computer Algebra Systems, though not needed here, the reader may refer to \cite{Koepf_93}.}. We have illustrated the pertinence of this approach by an academic example.

Note that the description of all possible $\pi$-0-flat outputs can be achieved; it will be the subject of a forthcoming paper. This will lead to a method to determine the minimal order of the polynomial $\pi$.

Other potential directions to extend this approach may be to investigate other possible signal spaces and the consequences of our definition to feedback design.

Finally, the question of increasing the efficiency of the proposed algorithm by using quotient free computations may also be addressed.

\bibliographystyle{elsarticle-num}

\appendix
\section{Inversion of a $\delta$-Polynomial}\label{app:sec}
Let $\pi = a_n \delta^n + \ldots + a_k \delta^k \in K[\delta]$ where
$k < n$ and $a_k,\ldots,a_n \in K$. Assume that $a_k \neq 0$, i.e. that the order of $\pi$ is $k$. The
inverse $\pi^{-1}$ in $K\(\delta\)$ may be computed in the following
way: First, using the fact that
\begin{equation*}
  \pi^{-1} = \delta^{-k} (a_n \delta^{n-k} + \ldots + a_k)^{-1} 
\end{equation*}
we may assume \WLOG\ that $k = 0$. We make an ansatz $\sum_{j \geq 0}
c_j \delta^j$ for $\pi^{-1}$ and compute
\begin{equation*}
  1 
  = \pi \sum_{j \geq 0} c_j \delta^j
  =  \sum_{i=0}^n \sum_{j \geq 0} a_i \delta^i(c_j) \delta^{i+j}
  =  
  \sum_{\ell \geq 0} 
  \Bigl( \sum_{i=0}^{\min\{\ell,n\}}  a_i \delta^i(c_{\ell-i}) \Bigr)
  \delta^\ell.
\end{equation*}
Comparing coefficients and using $a_0 \neq 0$, we can compute
\begin{equation*}
  1 = a_0 c_0 \iff c_0 = a_0^{-1}
\end{equation*}
and
\begin{equation*}
  0 = \sum_{i=0}^{\min\{\ell,n\}}  a_i \delta^i(c_{\ell-i})
  \iff
  c_\ell = - a_0^{-1} \sum_{i=1}^{\min\{\ell,n\}} a_i \delta^i(c_{\ell-i})
\end{equation*}
for all $\ell \geq 1$. Note, that the left hand side depends only on
those $c_i$ which are already computed.

\end{document}